\tikzset{node distance=2cm, auto} %default spacing for nodes in a commutative diagram
\newtheorem{thm}{Theorem}[section]
\newtheorem{prop}[thm]{Proposition}
\newtheorem{cor}[thm]{Corollary}
\newtheorem{lem}[thm]{Lemma}
\newtheorem{conj}[thm]{Conjecture}
\newtheorem{exa}[thm]{Example}
\newtheorem{quest}[thm]{Question}
\theoremstyle{definition}
\newtheorem{defn}[thm]{Definition}
\DeclareMathOperator{\std}{std} %standardization map
\DeclareMathOperator{\dom}{dom} %domain
\DeclareMathOperator{\QSym}{QSym}
\DeclareMathOperator{\St}{St} %permutation statistic
\DeclareMathOperator{\inver}{inv} %inversion statistic on permutations
\DeclareMathOperator{\Pk}{Pk} %Peak set statistic on permutations
\DeclareMathOperator{\Rpk}{Rpk} %Right Peak set statistic on permutations
\DeclareMathOperator{\Lpk}{Lpk} %Left Peak set statistic on permutations
\DeclareMathOperator{\rpk}{rpk} %Right Peak number statistic on permutations
\DeclareMathOperator{\lpk}{lpk} %Left Peak number statistic on permutations
\DeclareMathOperator{\pk}{pk} %peak number statistic on permutations
\DeclareMathOperator{\Epk}{Epk} %external peak set
\DeclareMathOperator{\Eval}{Eval} %external valley set
\DeclareMathOperator{\eval}{eval} %external valley number
\DeclareMathOperator{\epk}{epk} %external peak number
\DeclareMathOperator{\Val}{Val} %Valley set statistic on permutations
\DeclareMathOperator{\Lval}{Lval} %Peak set statistic on permutations
\DeclareMathOperator{\Rval}{Rval} %Peak set statistic on permutations
\DeclareMathOperator{\lval}{lval} %Left peak number statistic on permutations
\DeclareMathOperator{\rval}{rval} %Right Peak number statistic on permutations
\DeclareMathOperator{\val}{val} %val number statistic on permutations
\DeclareMathOperator{\maj}{maj} %maj permutation statistic
\DeclareMathOperator{\Des}{Des} %set of descents of a permutation
\DeclareMathOperator{\Asc}{Asc} %set of ascents of a permutation
\DeclareMathOperator{\des}{des}
\DeclareMathOperator{\asc}{asc}
\DeclareMathOperator{\udr}{udr}
\newcommand{\ben}{\begin{enumerate}}
\newcommand{\een}{\end{enumerate}}
\newcommand{\ble}{\begin{lem}}
\newcommand{\ele}{\end{lem}}
\newcommand{\bth}{\begin{thm}}
\renewcommand{\eth}{\end{thm}}
\newcommand{\bpr}{\begin{prop}}
\newcommand{\epr}{\end{prop}}
\newcommand{\bco}{\begin{cor}}
\newcommand{\eco}{\end{cor}}
\newcommand{\bcon}{\begin{conj}}
\newcommand{\econ}{\end{conj}}
\newcommand{\bde}{\begin{defn}}
\newcommand{\ede}{\end{defn}}
\newcommand{\bex}{\begin{exa}}
\newcommand{\eex}{\end{exa}}
\newcommand{\barr}{\begin{array}}
\newcommand{\earr}{\end{array}}
\newcommand{\btab}{\begin{tabular}}
\newcommand{\etab}{\end{tabular}}
\newcommand{\beq}{\begin{equation}}
\newcommand{\eeq}{\end{equation}}
\newcommand{\bea}{\begin{eqnarray*}}
\newcommand{\eea}{\end{eqnarray*}}
\newcommand{\bal}{\begin{align*}}
\newcommand{\bce}{\begin{center}}
\newcommand{\ece}{\end{center}}
\newcommand{\bpi}{\begin{picture}}
\newcommand{\epi}{\end{picture}}
\newcommand{\bpp}{\begin{picture}}
\newcommand{\epp}{\end{picture}}
\newcommand{\bfi}{\begin{figure} \begin{center}}
\newcommand{\efi}{\end{center} \end{figure}}
\newcommand{\bprf}{\begin{proof}}
\newcommand{\eprf}{\end{proof}\medskip}
\newcommand{\bsl}{\begin{slide}{}}
\newcommand{\esl}{\end{slide}}
\newcommand{\bfr}{\begin{frame}}
\newcommand{\efr}{\end{frame}}
\newcommand{\hso}[1]{\hspace{-1pt}}
\newcommand{\setm}{\setminus}
\def\<{\langle}
\def\>{\rangle}
\newcommand{\ree}[1]{(\ref{#1})}
\newcommand{\si}{\sigma}
\newcommand{\bbN}{{\mathbb N}}
\newcommand{\bbP}{{\mathbb P}}
\begin{document}
\pagestyle{plain}

\title{Bijective proofs of shuffle compatibility results
}
\author{Duff Baker-Jarvis\\[-5pt]
\small Department of Mathematics, Michigan State University,\\[-5pt]
\small East Lansing, MI 48824-1027, USA, {\tt bakerjar@msu.edu}\\
Bruce E. Sagan\\[-5pt]
\small Department of Mathematics, Michigan State University,\\[-5pt]
\small East Lansing, MI 48824-1027, USA, {\tt sagan@math.msu.edu}
}

\date{\today\\[10pt]
	\begin{flushleft}
	\small Key Words: bijection, descent, peak, permutation, shuffle compatible, statistic
	                                       \\[5pt]
	\small AMS subject classification (2010):  05A05  (Primary) 05A19  (Secondary)
	\end{flushleft}}

\maketitle

\begin{abstract}

Define a permutation to be any sequence of distinct positive integers.  Given two permutations $\pi$ and $\sigma$ on disjoint underlying sets, we denote by $\pi\shuffle\sigma$ the set of shuffles of $\pi$ and $\sigma$ (the set of all permutations obtained by interleaving the two permutations).  A permutation statistic is a function $\St$ whose domain is the set of permutations such that $\St(\pi)$ only depends on the relative order of the elements of $\pi$.  A permutation statistic is shuffle compatible if the distribution of $\St$ on $\pi\shuffle\sigma$ depends only on $\St(\pi)$ and $\St(\sigma)$ and their lengths rather than on the individual permutations themselves.  This notion is implicit in the work of Stanley in his theory of $P$-partitions.
The definition was explicitly given by Gessel and Zhuang who proved that various permutation statistics were shuffle compatible using mainly algebraic means.  This work was continued by Grinberg. 
The purpose of the present article is to use bijective techniques to give demonstrations of shuffle compatibility.  In particular, we show how a large number of permutation statistics can be shown to be shuffle compatible using a few simple bijections.  Our approach also leads to a method for constructing such bijective proofs rather than having to treat each one in an ad hoc manner.  Finally, we are able to prove a conjecture of Gessel and Zhuang about the shuffle compatibility of a certain statistic.

\end{abstract}

%%%%%%%%%%%%%%%%%%%%%%%%%%%%%%%%
%
% 	
%
%%%%%%%%%%%%%%%%%%%%%%%%%%%%%%%%

\section{Introduction}
Let $\bbP$ and $\bbN$ be the positive and nonnegative integers, respectively. To denote the cardinality of a set $U$ we use $\#U$ or $|U|$. All subsets of $\bbP$ should be assumed to be finite unless otherwise noted. For  $U\subset \bbP$, a \emph{permutation} of $U$ is a linear order $\pi=\pi_1\pi_2\ldots\pi_n$ of the elements of $U$.  
We sometimes separate the elements of $\pi$ by commas for ease of reading.
Denote the set of all linear orders on $U$ by 
$$L(U) = \{ \pi \; | \; \pi \text{ is a linear order on } U\}.$$ The \emph{length} of a permutation is the cardinality of its underlying set, i.e. $|U|$, which we denote by $|\pi|$. The \emph{domain} of a permutation $\pi \in L(U)$ is the set $U$, and we write 
$\dom(\pi)=U$. 

For $n,i,j\in \bbP$  we use the notation  $[n] = \{1, 2, \ldots, n\}$, and $[i,j]=\{i, i+1, \ldots, j\}$. Also let $[n]+i = \{k+i \; | \; k\in [n]\}$.  For sets, $U\sqcup V=W$ indicates that $W$ is the disjoint union of $U$ and $V$. 
Double braces indicate a multiset, that is, a family of elements where repetition is allowed. We will sometimes use multiplicity notation for multisets, e.g., $\{\{1,2^4,3^3\}\}$ is the multiset that contains $1$, $2$ four times, and $3$ three times.

 To compare permutations on different sets of the same size we have the following definition.
 
\begin{defn}
Let $U,V\subseteq \bbP$ be two subsets of the positive integers such that $|U|=|V|$. Let $\pi\in L(U)$. Define the \emph{standardization to $V$} of $\pi=\pi_1\pi_2\ldots \pi_{\ell}$ to be
$$\std_{V}(\pi) = f(\pi_1)f(\pi_2)\ldots f(\pi_\ell)$$
where $f:U\to V$ is the unique strictly increasing bijection. Let $n=|U|$. Then, if no subscript is given, define
$$\std\pi:=\std_{[n]}(\pi)$$
to be the standardization of $\pi$ to $\{1,2, \ldots, |U|\}$. \hfill $\square$
\end{defn}

Two permutations $\pi\in L(U)$, and $ \pi'\in L(V)$ are said to have the same \emph{relative order} if $\std_{V}(\pi)=\pi'$, or equivalently, $\std_U(\pi')=\pi$. For example if $U = \{1,7,8\}$, $V = \{2,3,9\}$, $\pi = 781$, $\pi'=392$, then $\pi$ and $\pi'$ have the same relative order since $\std_{U}(392) =781$. Equivalently, $\std\pi= \std\pi' = 231$. 

A \emph{permutation statistic} is a map $\St$ with domain 
$$\bigsqcup \limits_{\substack{U\subset \bbP\\ |U|<\infty}} L[U]$$
such that whenever $\pi$ and $\pi'$ have the same relative order, then $\St(\pi)=\St(\pi')$. It is useful to extend the notation for permutation statistics to sets of permutations by defining, for a set of permutations $\Pi$, $\St(\Pi)$ to be the multiset
$$\St(\Pi) = \{\{\St(\pi) \mid \pi\in \Pi \}\}.$$
We call this multiset the \emph{distribution} of $\St$ over the set $\Pi$.

The basic example of a (set-valued) permutation statistic is that of the descent set, $\Des$. For a permutation $\pi$, a \emph{descent} of $\pi$ is a position $i$ such that $\pi_i>\pi_{i+1}$. Then the \emph{descent set of $\pi$} is
$$\Des(\pi) = \{i  \mid i \text{ is a descent of } \pi\}.$$
The \emph{descent number} of $\pi$ is $\des(\pi) = \#\Des(\pi)$. Another important statistic is the \emph{major index}, $\maj$, given by
$$\maj(\pi) = \sum \limits_{i\in \Des(\pi)}i.$$
For example, given the permutation $\pi = 2157364\in L([7])$ we have $\Des(\pi) = \{1,4,6\}$ and $\maj(\pi) = 1+4+6=11$.

 We call a permutation statistic, $\St$,  a \emph{descent statistic} if it is a permutation statistic such that $\Des(\pi)=\Des(\pi')$ implies $\St(\pi)=\St(\pi')$. Both $\Des$ and $\maj$ are examples of descent statistics. There are many permutation statistics in the literature which are not descent statistics. One such statistic is 
 $$\inver(\pi) = \#\{i<j \mid \pi_i>\pi_j\}$$
 which counts the number of inversions in a permutation. For example $132$ and $231$ are two permutations that have the same descent set $\{2\}$, but $\inver(132) = 1$ whereas $\inver(231) = 2$.

Given a permutation $\pi$, a {\em subword} of $\pi$ is a subsequence of not necessarily consecutive elements, whereas a {\em factor} is a subsequence whose elements are consecutive.
For two permutations with disjoint domains, a \emph{shuffle} of $\pi$ and $\sigma$ is  a permutation $\tau\in L(\dom(\pi)\sqcup\dom(\sigma))$ 
such that both $\pi$ and $\sigma$ occur as subwords. The \emph{shuffle set} of $\pi$ and $\sigma$ is
 	$$\pi \shuffle \sigma = \{ \tau \; | \; \tau \text{ is a shuffle of $\pi$, $\sigma$}\}$$
which always has cardinality $\binom{|\pi|+|\sigma|}{|\pi|}$. As an example, if $\pi=132$ and $\sigma=76$ then
$$\pi \shuffle \sigma = \{13276,\ 13726,\ 13762,\ 17326,\ 17362,\ 17632,\ 71326,\ 71362,\ 71632, 76132 \}$$
which has size $\binom{3+2}{3} = 10$. Whenever we write a shuffle set $\pi \shuffle \sigma$ we will implicitly assume that the permutations $\pi$ and $\sigma$ have disjoint domains.  It is an interesting fact that the statistics $\maj$ and $\inver$ have the same distribution over the shuffle set $\pi\shuffle \sigma$.

It is helpful to have a way to discuss and distinguish individual shuffles without carrying the entire information of both permutations along. To do this, we will use words in the two letter alphabet $A=\{a,b\}$. Denote  by
$$A^* = \{\alpha_1\alpha_2\ldots \alpha_k \ | \ k\geq 0, \ \alpha_i\in A\}$$
the set of words in the letters of $A$. This is  called the \emph{Kleene closure} of $A$. Suppose that $|\pi|=m$ and $|\sigma|=n$. 
Then we will associate to each permutation $\tau\in \pi\shuffle \sigma$ a word $\omega(\tau)\in A^*$ of length $m+n$ obtained by replacing the elements of $\pi$ with the letter $a$ and elements of $\sigma$ with the letter $b$.
Call $\omega(\tau)$ the \emph{word of $\tau$}.
 For example if $\pi=132$, $\sigma=4589$ and $\tau=1453829$, then $\omega(\tau) =  abbabab$. It is true that $\omega(\tau)$ depends on both $\pi$ and $\sigma$, but these will always be clear from context.
We now introduce the definition which will be our fundamental object of study.

\begin{defn}
\label{defn-shufcompat}
Assume $\pi,\pi', \sigma, \sigma'$ are permutations such that $|\pi|=|\pi'|$, $|\sigma|=|\sigma'|$ and $\dom(\pi)\cap \dom(\sigma)=\dom(\pi')\cap \dom(\sigma')=\emptyset$. Call a permutation statistic $\St$ \emph{shuffle compatible} if  for all such permutations which also satisfy $\St(\pi)=\St(\pi')$ and $\St(\sigma)=\St(\sigma')$ we have
$$\St(\pi\shuffle \sigma) = \St(\pi'\shuffle \sigma').$$
Being shuffle compatible is also equivalent to the existence of a bijection $\Theta:\pi\shuffle \sigma \to \pi' \shuffle \sigma'$ with the property that $\St(\Theta(\tau))=\St(\tau)$ for all $\tau\in\pi\shuffle \sigma$. We call a map with this property \emph{$\St$ preserving}. 
\hfill $\square$
\end{defn}

As an illustration of this, let us consider the $\maj$ statistic with $\pi = 4312$, $\pi' = 2341$, $\sigma=76$ and $\sigma'= 98$. These satisfy $\maj(\pi)=\maj(\pi') = 3$ and $\maj(\sigma)=\maj(\sigma')=1$.
Then one can check that 
$$
\maj(\pi\shuffle \sigma) = \maj(\pi'\shuffle \sigma') = \{\{4, 5, 6^2, 7^2, 8^3, 9^2, 10^2, 11, 12\}\}.
$$
The $\maj$ statistic is indeed shuffle compatible as we will show later.

The standard {\em $q$-analogue} of the nonnegative integer $n$ is 
$$
[n]_q = 1+q+q^2+\ldots +q^{n-1}
$$ 
where $q$ is a variable. Given integers $0\le k \le n$ the corresponding {\em $q$-binomial coefficient} is given by
$${n \brack k}_q = \frac{[n]_q!}{[k]_q![n-k]_q!}$$
where
$$
[n]_q! =  [1]_q [2]_q\cdots[n]_q.
$$ 
Let 
$$\pi \shuffle_k \sigma = \{\tau\in \pi \shuffle \sigma \; | \; \des(\tau) =k \}.$$
Stanley~\cite{Stanley1}, gave proofs of the identities
$$\sum \limits_{\tau\in \pi\shuffle \sigma} q^{\maj\tau} = q^{\maj\pi+\maj\sigma}{|\pi|+|\sigma| \brack |\pi|}_q$$
and
$$\sum \limits_{\tau\in \pi \shuffle_k \sigma}q^{\maj \tau} = q^{\maj\pi +\maj\sigma +(k-\des\pi)(k-\des\sigma)} {|\pi|-\des(\pi)+\des(\sigma) \brack k-\des(\pi)}_q {|\sigma|-\des(\sigma)+\des(\pi) \brack k-\des(\sigma)}_q$$ 
where $\pi \shuffle_k \sigma$ is the set of all shuffles of $\pi$ and $\sigma$ which have $k$ descents.
These imply that  $\maj$ and $\des$ are shuffle compatible. He utilized $P$-partitions to obtain them, and later bijective proofs were given by Goulden~\cite{Goulden85}, Guha and Padmanabhan~\cite{GuhaPad1990}, and Novick~\cite{Novick2010}.

  A recent paper by Gessel and Zhuang \cite{GesZhu17} introduced the idea of a shuffle compatible permutation statistic and proceeded to show that many permutation statistics in fact do have this property. In addition they showed that a descent statistic being shuffle compatible is equivalent to the existence of a certain algebra 
that is a quotient algebra of the Hopf algebra $\QSym$ of quasisymmetric functions. 
The algebra $\QSym$ can itself be identified as the shuffle algebra of the descent set statistic $\Des$.
The methods of Gessel and Zhuang were primarily algebraic using noncommutative symmetric functions, quasisymmetric functions, and variants of quasisymmetric functions to prove that statistics were shuffle compatible. They were also able to characterize many of the  algebras corresponding to these statistics. They also conjectured that several permutation statistics were shuffle compatible. Some of these conjectures were then proven by Grinberg in \cite{Grin2018} using enriched $P$-partitions similar to those developed by Stembridge in \cite{Stembridge1997}.
It was also conjectured in \cite{GesZhu17} that perhaps it was true that all shuffle compatible permutation statistic descent statistics. This has been shown to be false as an example of a shuffle compatible permutation statistic that is not a descent statistic was given by O{\u{g}}uz in \cite{Oguz2018}.

In this paper we present  a bijective approach to showing that permutation statistics are shuffle compatible. Our method has the  following three advantages.  First of all, it is uniform in that essentially the same steps are followed to achieve each result. In addition, our proofs tend to be shorter and more transparent than other methods.  Finally, we are also able to prove shuffle compatibility for $(\udr, \pk)$, one of the statistics conjectured to be shuffle compatible by Gessel and Zhuang which have resisted other techniques.

The rest of this paper is structured as follows. Section~2 gives a summary of the definitions of the various permutation statistics that we study. In Section~3 we outline our general approach  to proving shuffle compatibility as well as proving one of the main reductions that is  repeatedly used. Bijective proofs for the shuffle compatibility of the known shuffle compatible set valued statistics are given in Section~4. Section~5 explores shuffle compatibility of those statistics related to the major index and descent number. In Section~6 we consider those statistics related to peaks. We conclude with a section outlining possible future directions and work.

\section{Permutation statistic definitions}

 Let $\pi\in L(U)$  be a permutation. Set $m=|U|$. The following is a list of all permutation statistics we will consider. We use the convention that when the name of a statistic is capitalized it is a set-valued statistic,  while lower case names are used for integer valued statistics.
\begin{enumerate}
\item[(i)] Recall that the \emph{descent set}, $\Des$ is defined by
$$\Des(\pi) = \{i  \mid i \text{ is a descent of } \pi\}\subseteq[m-1]$$
and the \emph{descent number} is the number of descents in the permutation, $\des(\pi) = \#\Des(\pi)$. An \emph{ascent} of a permutation is a position $i$ such that $\pi_i<\pi_{i+1}$. The set of the positions of ascents is denoted $\Asc(\pi)$, and $\asc(\pi)$ is the number of ascents. Two related permutations statistics are
\begin{align*}
\chi^-(\pi) &= \begin{cases}
1	&	\text{ if } 1\in \Des(\pi),\\
0	&	\text{ if } 1\not\in \Des(\pi)
\end{cases}\\
\chi^+(\pi) &= \begin{cases}
1	&	\text{ if } m-1\in \Asc(\pi),\\
0	&	\text{ if } m-1\not\in \Asc(\pi)
\end{cases}
\end{align*}
For example, if $\pi=685934$ then $\Des(\pi) = \{2,4\}$ since $8>5$ and $9>3$. Therefore $\chi^-(\pi) = 0$, but $\chi^+(\pi)=1$.

\item[(ii)]
Also as previously introduced, the \emph{major index} is given by
$$\maj(\pi) = \sum \limits_{i\in \Des(\pi)}i.$$

\item[(iii)]
A \emph{peak} of a permutation is a position $i$ such that $\pi_{i-1}<\pi_i>\pi_{i+1}$. The \emph{peak set} is 
$$\Pk(\pi) = \{ i \mid \pi_{i-1}<\pi_i>\pi_{i+1}  \}\subseteq [2,m-1].$$
and $\pk(\pi) = \#\Pk(\pi)$ is the \emph{peak number}.
A \emph{valley} of a permutation is a position $i$ such that  $\pi_{i-1}>\pi_i<\pi_{i+1}$. The \emph{valley set}, $\Val(\pi)$, and the valley number $\val(\pi)$ are defined analogously.  
Returning to our example, if $\pi=685934$  then $\Pk(\pi) = \{2,4\}$, $\pk = 2$, $\Val=\{3, 5\}$, and $\val = 2$.

\item[(iv)] A \emph{left peak} is a peak of the sequence $0\pi$, a \emph{right peak} is a peak of $\pi 0$, and an \emph{exterior peak} is a peak of $0 \pi 0$. The initial $0$ is added at position $0$ and the final $0$ is added at position $m+1$. We then have the \emph{left peak set}, $\Lpk$, \emph{left peak number} $\lpk$, the \emph{right peak set} $\Rpk$, the \emph{right peak number}, $\rpk$, the \emph{exterior peak set}, $\Epk$, and the \emph{exterior peak number}, $\epk$.
Continuing the previous example, $\Lpk(\pi) = \{2,4\}$, $\lpk(\pi)=2$, $\Rpk(\pi)=\Epk(\pi) = \{2,4, 6\}$, and $\rpk = \epk(\pi)=3$.

\item[(v)] A \emph{left valley} is a valley of $\infty\pi$, a \emph{right valley} is a valley of $\pi \infty$, and an \emph{exterior valley} is a valley of $\infty \pi \infty$. The definitions of the following statistics for valleys are analogous to those for peaks:
$$\Rval,\ \Lval,\ \Eval,\ \rval,\ \lval,\ \eval.$$
In our running example, $\Rval(\pi) =\{3,5\}$, $\rval(\pi) =2$,  $\Lval(\pi) = \Eval(\pi) = \{1,3,5\} $, and  $\eval(\pi)=\lval(\pi)= 3$.

\item[(vi)] 
A \emph{monotone factor} of a permutations is a factor that is either strictly increasing or strictly decreasing.
A \emph{birun} is a maximal  monotone factor. 
An \emph{updown} run is a birun of $0\pi$. The number of updown runs is denoted $\udr$.        
The number of biruns itself is not shuffle compatible, but it affords the most convenient definition of $\udr$ which is. As we will see in Section~6, one can also define $\udr$ using a linear combination of $\pk$, $\chi^+$, and $\chi^-$.
Using the usual example, $\udr(\pi) = 5$, where the 5 maximal monotone factors of $0\pi$ are $068$, $85$, $59$, $93$, and $34$.
\end{enumerate}

\section{A General Approach}

In this section we describe a method that is general enough to tackle most of the known shuffle compatible permutation statistics in a uniform and bijective manner. Let $\St$ be a descent statistic. In order to show it is shuffle-compatible bijectively we will use the following outline. 
\begin{enumerate}

\item[(i)] Reduce to showing only a special case of shuffle-compatibility using Corollary~\ref{cor-screduction} (b) or (c) below, whichever is most convenient. For the rest of this outline we assume (b) is chosen and let $m=|\pi|$.

\item[(ii)] Find a set $\Pi\subseteq L([m])$, called the set of {\em canonical permutations}, such that if $\pi,\pi'\in\Pi$ and the hypotheses of Definition \ref{defn-shufcompat}  are satisfied with $\sigma=\sigma'$, then clearly 
$\St(\pi\shuffle \sigma) = \St(\pi'\shuffle \sigma)$.

\item[(iii)] Find a function $d:L([m])\rightarrow\bbN$ 
such that for any $\pi\not\in\Pi$  there is a $\pi'\in L([m])$ with $\St(\pi')=\St(\pi)$ and $d(\pi')<d(\pi)$ as well as an $\St$-preserving bijection $\pi\shuffle \sigma \to \pi'\shuffle \sigma$.

\end{enumerate}

To see that this suffices to show shuffle-compatibility, repeatedly apply step (iii) to generate a sequence of permutations
with decreasing values of $d$ and corresponding $\St$-preserving bijections.  This can only be done a finite number of times since the range of $d$ is $\bbP$ which is well ordered.  Upon termination we must have $\pi'\in\Pi$ with $\St(\pi')=\St(\pi)$ and, via composition, an $\St$-preserving bijection $\pi\shuffle \sigma \to \pi'\shuffle \sigma$ where $\pi$ is the permutation we started with.  By step (ii), this is enough to prove shuffle compatibility.  We also note that often the bijections in step (iii) will be constructed using (variations of) a map which we will call the fundamental bijection, see  Definition~\ref{defn-fundbij}.

The next lemma gives our first reduction. It is at the heart of the proof of Corollary~\ref{cor-screduction} which is our main tool for reducing the number of cases under consideration. An example of its  proof is given afterwards.
\begin{lem}
\label{lem-setcompat} 
 Let $\St$ be a descent statistic, and consider four permutations $ \pi, \pi', \sigma, \sigma'$ such that $\dom(\pi)\cap \dom(\sigma)=\dom(\pi')\cap \dom(\sigma')=\emptyset$. If $\std \pi=\std\pi'$ and $\std \sigma=\std \sigma'$ then
$$\St(\pi\shuffle \sigma) = \St(\pi'\shuffle \sigma').$$
\end{lem}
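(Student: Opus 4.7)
The plan is to construct an explicit bijection $\Phi:\pi\shuffle\sigma \to \pi'\shuffle\sigma'$ that preserves the descent set; since $\St$ is a descent statistic, this suffices for the desired multiset equality. Setting $m=|\pi|$ and $n=|\sigma|$, I would first normalize the underlying sets. Let $\phi$ be the unique increasing bijection $\dom(\pi)\cup\dom(\sigma)\to[m+n]$. Applying $\phi$ entrywise to every $\tau\in\pi\shuffle\sigma$ produces a descent-preserving bijection onto $\phi(\pi)\shuffle\phi(\sigma)$, and one has $\std\phi(\pi)=\std\pi$ and $\std\phi(\sigma)=\std\sigma$. Applying the same reduction to $(\pi',\sigma')$, we may assume $\dom(\pi)\sqcup\dom(\sigma)=\dom(\pi')\sqcup\dom(\sigma')=[m+n]$.

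In this normalized setting, $(\pi,\sigma)$ is determined by $\std\pi$, $\std\sigma$, and the subset $\dom(\pi)\subseteq[m+n]$. Any two size-$m$ subsets of $[m+n]$ are connected by a sequence of adjacent swaps, so it suffices to build the bijection for a single such swap: suppose $i\in\dom(\pi)$ and $i+1\in\dom(\sigma)$, and define $\pi^*$ from $\pi$ by replacing $i$ with $i+1$, and $\sigma^*$ from $\sigma$ by replacing $i+1$ with $i$; a direct check shows $\std\pi^*=\std\pi$ and $\std\sigma^*=\std\sigma$. I would then define $\Phi:\pi\shuffle\sigma\to\pi^*\shuffle\sigma^*$ as follows. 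For $\tau\in\pi\shuffle\sigma$, let $p$ and $q$ be the positions of $i$ and $i+1$ in $\tau$. If $|p-q|\ge 2$, set $\Phi(\tau)$ to be $\tau$ with the values $i$ and $i+1$ exchanged. If $|p-q|=1$, let $\Phi(\tau)$ be the same underlying sequence as $\tau$, reinterpreted as a shuffle of $\pi^*$ and $\sigma^*$ (so that $\omega(\Phi(\tau))$ differs from $\omega(\tau)$ only at the two positions $p$ and $q$).

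The main obstacle will be verifying that $\Phi$ is well-defined, descent-preserving, and bijective. When $|p-q|\ge 2$, the value exchange leaves the $a,b$-word unchanged, so $\Phi(\tau)\in\pi^*\shuffle\sigma^*$ is immediate; descent preservation follows because each comparison between $i$ (resp.\ $i+1$) and a neighbor agrees with the corresponding comparison using $i+1$ (resp.\ $i$), unless the neighbor equals $i+1$ or $i$, which is excluded by $|p-q|\ge 2$. When $|p-q|=1$ one must carefully identify the new $a$- and $b$-positions and confirm that their entries read off $\pi^*$ and $\sigma^*$ in the correct order; this comes down to checking, using that the adjacent positions $p$ and $q$ are the only ones whose roles change, that the $r$-th $a$-position of $\Phi(\tau)$ carries the value $i+1$ at the right slot in $\pi^*$, and symmetrically for $\sigma^*$. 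Descent preservation is then automatic because the sequence itself is unchanged. Bijectivity follows since the two cases are disjoint in image—they are distinguished by whether $i$ and $i+1$ remain adjacent in $\Phi(\tau)$—and because $\pi\shuffle\sigma$ and $\pi^*\shuffle\sigma^*$ both have cardinality $\binom{m+n}{m}$.
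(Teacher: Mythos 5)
Your proposal is correct and follows essentially the same route as the paper: after normalizing both pairs to the ground set $[m+n]$, the paper also uses the adjacent-value swap bijection (exchange $i$ and its neighbor when they are not adjacent in $\tau$, reinterpret the word when they are), proves it preserves $\Des$ by the same local comparison argument, and iterates. The only cosmetic difference is bookkeeping: the paper organizes the iteration by reducing a measure $\#O$ toward the canonical pair $(\std_{[m]}\pi,\std_{[n]+m}\sigma)$, while you walk directly between the two domain subsets by adjacent swaps, and the paper observes the map is an involution rather than invoking a cardinality count.
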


\begin{proof}
Our method of proof will reflect the philosophy of our general approach, but with some modifications since we are only showing a special case of shuffle compatibility and do not yet have the full power of Corollary~\ref{cor-screduction}. In place of (i) above, we reduce the possible domains of our permutations by observing that since permutation statistics only depend on the relative order we may assume without loss of generality that 
\beq
\label{domains}
\dom(\pi)\sqcup \dom(\sigma) = \dom(\pi')\sqcup \dom(\sigma') = [m+n]
\eeq
where $m=|\pi| = |\pi'|$, and $n = |\sigma|=|\sigma'|$. Let $U=[m]$ and $V=[n]+m$.

To mimic (ii), we consider the set
$$\Pi = L(U)\times L(V).$$
For suppose we have $(\pi,\si),(\pi',\si')\in \Pi$ satisfying the hypotheses of the Lemma.  Then
$\pi,\pi'\in L([m])$ implies 
$$
\pi=\std\pi=\std\pi'=\pi'.
$$
Similarly $\sigma=\sigma'$.  So clearly $\St(\pi\shuffle \sigma) = \St(\pi'\shuffle \sigma')$.

For (iii), we assume only~\ree{domains}, and produce an $\St$-preserving  bijection 
$$
\pi\shuffle \sigma \to \std_U(\pi) \shuffle \std_V(\sigma).
$$
Our measure of how close a pair of permutations is to being in $\Pi$  is given by $\#O$ where
$$O= \{(i,j)\in \dom(\pi)\times \dom(\sigma)\ | \ i>j \}.$$
A pair will have $\#O=0$ exactly when $(\pi, \sigma)\in \Pi$.

Now if $(\pi,\sigma)\not\in\Pi$ we will produce a pair of permutations $(\pi'', \sigma'')$ with $\St(\pi)=\St(\pi'')$, $\St(\sigma)=\St(\sigma'')$ and a $\St$-preserving bijection $\pi\shuffle \sigma \to \pi'' \shuffle \sigma''$ that reduces $\#O$. This suffices because repeatedly applying this operation will produce the pair of permutations $(\std_U(\pi), \std_V(\sigma))\in\Pi$, and (by composition) the desired  $\St$-preserving bijection. An analogous argument gives a $\St$-preserving bijection $\pi'\shuffle \sigma' \to \std_U(\pi')\shuffle \std_V(\sigma')$. 
Then we will have, using the argument in the paragraph about part (ii),
$$\St(\pi\shuffle \sigma) = \St(\std_U(\pi)\shuffle \std_V(\sigma)) =  \St(\std_U(\pi')\shuffle \std_V(\sigma'))=
 \St(\pi'\shuffle \sigma')$$
as required. 

We now construct $(\pi'', \sigma'')$ and the $\St$-preserving bijection. Since $(\pi,\sigma)\not\in\Pi$, there exists a pair $(i,i-1)\in O$ such that $i\in\dom(\pi)$ and $i-1 \in \dom(\sigma)$. Set $\pi'' = (i,i-1)\pi$ and $\sigma'' = (i,i-1)\sigma$ where $(i,i-1)\pi$ is the permutation $\pi$ with $i$ replaced by $i-1$ and similarly for $(i,i-1)\sigma$. Let $\tau\in \pi\shuffle \sigma$. Then the bijection is given by
$$
T_i(\tau) = \begin{cases}
(i,i-1)\tau & \text{ if $i$, $i-1$ are not adjacent in $\tau$},\\
\tau & \text{ otherwise},
\end{cases}
$$
where $(i,i-1)\tau$  is $\tau$ with $i$ and $i-1$ interchanged.

This map is its own inverse, hence a bijection. To see that the image of the map is in $\pi''\shuffle \sigma''$ note that if $i, i-1$ are not adjacent then $T_i(\tau)\in \pi''\shuffle \sigma''$ since $T_i(\tau)$ is the unique shuffle of $\pi''$ and $\sigma''$ whose word satisfies $\omega(T_i(\tau))=\omega(\tau)$. And if  $i$ and $i-1$ are adjacent in $\tau$, then $\tau$ is easily seen to also be a shuffle of $\pi''$ and $ \sigma''$. 

The map $T_i$ is $\Des$ preserving because swapping the positions of $i,i-1$ when $i$ and $i-1$ are not adjacent will not change the order relation between any adjacent pairs. Indeed, given any $j\not\in \{i-1,i\}$ that is adjacent to one or both of $i,i-1$, it is clear that either both $j>i$ and $j>i-1$, or $j<i-1$ and $j<i$, and hence the inequalities are preserved when interchanging $i, i-1$. It follows from the definition of a descent statistic that $T_i$ is also $\St$ preserving. 
\end{proof}

As an example, let $\boldsymbol{U=\{\boldsymbol{1},\boldsymbol{2},\boldsymbol{4}\}}$, $V=\{3,7\}$ and $\boldsymbol{\pi=241}$ and $\sigma=73$. Then 
$$
\pi\shuffle \sigma = \{\boldsymbol{241}73,\ \boldsymbol{24}7\boldsymbol{1}3,\ \boldsymbol{24}73\boldsymbol{1},\  \boldsymbol{2}7\boldsymbol{41}3,\ \boldsymbol{2}7\boldsymbol{4}3\boldsymbol{1},\ \boldsymbol{2}73\boldsymbol{41},\ 7\boldsymbol{241}3,\ 7\boldsymbol{24}3\boldsymbol{1},\ 7\boldsymbol{2}3\boldsymbol{41},\ 73\boldsymbol{241} \}.
$$
Taking $\St$ to be the peak set statistic, 
$$
\Pk(\pi\shuffle \sigma) = \{\{\{2\}^2,\ \{3\}^4,\ \{4\}^2,\  \{2,4\}^2\}\}.
$$

Now standardize to $[m+n]$ by replacing $\sigma$ with $\widetilde{\sigma}=53$, and $V$ with $\widetilde{V}=\{3,5\}$ to obtain
$$
\pi\shuffle \widetilde{\sigma} = \{\boldsymbol{241}53,\ \boldsymbol{24}5\boldsymbol{1}3,\ \boldsymbol{24}53\boldsymbol{1},\  \boldsymbol{2}5\boldsymbol{41}3,\ \boldsymbol{2}5\boldsymbol{4}3\boldsymbol{1},\ \boldsymbol{2}53\boldsymbol{41},\ 5\boldsymbol{241}3,\ 5\boldsymbol{24}3\boldsymbol{1},\ 5\boldsymbol{2}3\boldsymbol{41},\ 53\boldsymbol{241} \}.
$$
Clearly $\Pk(\pi\shuffle\widetilde{\sigma}) = \Pk(\pi\shuffle \sigma)$.
We next would like to change $\boldsymbol{U}$ to $\boldsymbol{U'=\{1,2,3\}}$ and $\widetilde{V}$ to $\widetilde{V}'=\{4,5\}$. This can be done using $(4,3)\in O$.
We apply 
$$T_4(\pi\shuffle \widetilde{\sigma}) = \{\boldsymbol{231}54, \boldsymbol{23}5\boldsymbol{1}4, \boldsymbol{23}54\boldsymbol{1},  \boldsymbol{2}5\boldsymbol{31}4 ,\boldsymbol{2}54\boldsymbol{31}, \boldsymbol{2}5\boldsymbol{3}4\boldsymbol{1}, 5\boldsymbol{231}4, 5\boldsymbol{2}4\boldsymbol{3}\boldsymbol{1}, 5\boldsymbol{2}\boldsymbol{3}4\boldsymbol{1},54\boldsymbol{231} \}$$
where, for instance, $T_4(5\boldsymbol{241}3) = 5\boldsymbol{231}4$ since $3$ and $4$ are not adjacent. On the other hand, $T_4(5\boldsymbol{2}3\boldsymbol{41})=5\boldsymbol{23}4\boldsymbol{1}$ since $3,4$ are adjacent. One can check that  the distribution with respect to $\Pk$ remains unchanged.

The following corollary shows that in order to check shuffle compatibility, it suffices to check the special case when the domains of the permutation have some fixed relation with each other. This reduction greatly simplifies the required arguments for showing statistics are shuffle compatible.

\begin{cor}
\label{cor-screduction}
Suppose $\St$ is a descent statistic. The following are equivalent.
\begin{enumerate}
\item[(1)] The statistic $\St$ is shuffle compatible.
\item[(2)]
If $\St(\pi)=\St(\pi')$ where $\pi,\pi'\in L([m])$, and $\sigma\in L([n]+m)$ for some $m,n\geq 0$, then
 $$\St(\pi\shuffle \sigma)=\St(\pi'\shuffle \sigma).$$
\item[(3)]  
If $\St(\sigma)=\St(\sigma')$ where  $\sigma,\sigma' \in L([n]+m)$, and $\pi\in L([m])$ for some $m,n\geq 0$,
then $$\St(\pi\shuffle \sigma')=\St(\pi\shuffle \sigma).$$
\end{enumerate}
\end{cor}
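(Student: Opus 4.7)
The implications $(1) \Rightarrow (2)$ and $(1) \Rightarrow (3)$ are immediate, since $(2)$ and $(3)$ are each merely the special case of shuffle compatibility where the domains are constrained to have the form $[m]$ and $[n]+m$. All the substance lies in the converse directions. I will focus on $(2) \Rightarrow (1)$; the proof of $(3) \Rightarrow (1)$ is entirely analogous (or, alternatively, follows by observing that $\pi \shuffle \sigma = \sigma \shuffle \pi$ as sets, so one can swap the roles).

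The plan is to reduce an arbitrary pair of shuffle instances to the canonical form required by $(2)$ using Lemma~\ref{lem-setcompat}. Suppose $\pi, \pi', \sigma, \sigma'$ satisfy the hypotheses of Definition~\ref{defn-shufcompat}, with $|\pi| = |\pi'| = m$ and $|\sigma| = |\sigma'| = n$. Introduce the canonical representatives
\[
\tilde\pi = \std_{[m]}(\pi), \quad \tilde\sigma = \std_{[n]+m}(\sigma), \quad \tilde\pi' = \std_{[m]}(\pi'), \quad \tilde\sigma' = \std_{[n]+m}(\sigma').
\]
Since standardization only changes labels and preserves relative order, $\St(\tilde\pi) = \St(\pi) = \St(\pi') = \St(\tilde\pi')$ and $\St(\tilde\sigma) = \St(\sigma) = \St(\sigma') = \St(\tilde\sigma')$. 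Moreover $\std \pi = \std \tilde\pi$ and $\std \sigma = \std \tilde\sigma$, so applying Lemma~\ref{lem-setcompat} to the pairs $(\pi,\sigma)$ and $(\tilde\pi,\tilde\sigma)$ yields
\[
\St(\pi \shuffle \sigma) = \St(\tilde\pi \shuffle \tilde\sigma),
\]
and analogously $\St(\pi' \shuffle \sigma') = \St(\tilde\pi' \shuffle \tilde\sigma')$.

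Now I invoke hypothesis $(2)$: since $\tilde\pi, \tilde\pi' \in L([m])$ with $\St(\tilde\pi) = \St(\tilde\pi')$, and $\tilde\sigma \in L([n]+m)$, we conclude $\St(\tilde\pi \shuffle \tilde\sigma) = \St(\tilde\pi' \shuffle \tilde\sigma)$. It remains to swap $\tilde\sigma$ for $\tilde\sigma'$ on the right. For this I apply Lemma~\ref{lem-setcompat} once more, this time to the pairs $(\tilde\pi', \tilde\sigma)$ and $(\tilde\pi', \tilde\sigma')$: the standardizations of $\tilde\sigma$ and $\tilde\sigma'$ agree (both equal $\std \sigma = \std \sigma'$, since $\St$ equality was assumed and standardization is idempotent on relative order), and trivially $\std \tilde\pi' = \std \tilde\pi'$, giving $\St(\tilde\pi' \shuffle \tilde\sigma) = \St(\tilde\pi' \shuffle \tilde\sigma')$. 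Chaining the four equalities completes the argument.

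I do not anticipate a genuine obstacle here; the corollary is essentially a repackaging of Lemma~\ref{lem-setcompat}. The one point to be careful about is the bookkeeping with standardizations: one must verify that $\std \tilde\sigma = \std \sigma$ and similarly for the primed version, so that Lemma~\ref{lem-setcompat} applies in its final use. This is automatic because $\std_V$ is a relabeling bijection and $\std$ is invariant under any such relabeling. For $(3) \Rightarrow (1)$, one repeats the same argument with the roles of the two permutations interchanged, noting that the hypothesis now allows us to vary $\tilde\sigma$ with $\tilde\pi$ fixed, and that $(2)$ and $(3)$ differ only in which coordinate is held constant.
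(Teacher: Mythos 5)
Your reduction to canonical representatives and your first application of hypothesis (2) are fine, but the last step breaks down. To replace $\tilde\sigma$ by $\tilde\sigma'$ you invoke Lemma~\ref{lem-setcompat}, and to justify its hypothesis you claim $\std\tilde\sigma=\std\tilde\sigma'$ ``since $\St$ equality was assumed.'' This is false: $\St(\sigma)=\St(\sigma')$ does not imply $\std\sigma=\std\sigma'$. For instance, with $\St=\Des$ the permutations $132$ and $231$ satisfy $\Des(132)=\Des(231)=\{2\}$ but have different standardizations. Indeed, if equality of a descent statistic forced equality of standardizations, then Lemma~\ref{lem-setcompat} alone would make every descent statistic shuffle compatible, which is not the case. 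So your chain of equalities establishes only $\St(\pi\shuffle\sigma)=\St(\tilde\pi'\shuffle\tilde\sigma)$ and leaves the passage from $\tilde\sigma$ to $\tilde\sigma'$ unproved.

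The missing idea, which is how the paper closes the argument, is to use hypothesis (2) a \emph{second} time with the roles of the two factors interchanged. After replacing $\std_{[m]}(\pi)$ by $\std_{[m]}(\pi')$, apply Lemma~\ref{lem-setcompat} again to re-standardize so that the $\sigma$'s occupy the low interval and $\pi'$ occupies the high one: pass from $\std_{[m]}(\pi')\shuffle\std_{[n]+m}(\sigma)$ to $\std_{[m]+n}(\pi')\shuffle\std_{[n]}(\sigma)$. Now $\std_{[n]}(\sigma)$ and $\std_{[n]}(\sigma')$ play the role of the pair of permutations on an initial interval with equal $\St$-values, and $\std_{[m]+n}(\pi')$ plays the role of the fixed permutation on the shifted interval, so (2) applied with $m$ and $n$ exchanged lets you swap $\sigma$ for $\sigma'$. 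A final application of Lemma~\ref{lem-setcompat} returns you to $\St(\pi'\shuffle\sigma')$. The same re-standardization maneuver is what makes $(3)\Rightarrow(1)$ go through; it does not simply follow from the symmetry $\pi\shuffle\sigma=\sigma\shuffle\pi$, because (2) and (3) are not symmetric statements (in each, the permutation being varied sits on a specific interval).
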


\begin{proof}
It is clear that (1) implies (2) and (3) as these are special cases of the definition of shuffle compatibility. Assume (2) holds and let $\pi, \pi'$ be any two permutations of the same length $m$ such that $\St(\pi)=\St(\pi')$. Let $\sigma, \sigma'$ be two permutations of the same length $n$ and disjoint from $\pi, \pi'$ (respectively) such that $\St(\sigma)=\St(\sigma')$. Set $U=[m]$, $U^+=[m]+n$, $V =[n]$, and $V^+ = [n]+m$. Then by Lemma \ref{lem-setcompat} and our assumption,
$$
\begin{array}{rllll}
\St(\pi\shuffle \sigma) &= & \St(\std_{U}(\pi)\shuffle \std_{V^+}(\sigma))\hspace{.5cm} &&\text{ by Lemma \ref{lem-setcompat}}\\
&= &\St(\std_{U}(\pi')\shuffle \std_{V^+}(\sigma))&& \text{ by (2)} \\
&=& \St(\std_{U^+}(\pi')\shuffle\std_{V}(\sigma) ) &&\text{ by Lemma \ref{lem-setcompat}} \\
&=&  \St(\std_{U^+}(\pi')\shuffle \std_{V}(\sigma'))  &&\text{ by (2)}\\
&=& \St(\pi'\shuffle \sigma')  &&\text{ by Lemma \ref{lem-setcompat}.}
\end{array}$$
The proof that (3) implies (1) is very similar.
\end{proof}

\section{Set valued statistics}

Our first main results are to give bijective proofs for the shuffle compatibility of some set valued statistics.  The statistic $\Des$ was given a different bijective proof in \cite{GesZhu17}, so the novelty here is the bijective proofs of the remaining set valued statistics as well as the uniform manner in which they are attained.

\begin{defn}
\label{defn-fundbij}
Given permutations $\pi,\sigma$ with disjoint domains and a third permutation $\pi'$ dijoint from $\sigma$ with $|\pi|=|\pi'|$, define the \emph{fundamental bijection} 
$$\Phi:\pi\shuffle \sigma \to \pi' \shuffle \sigma$$
by
$$\Phi(\tau) = \tau'$$
where $\tau'\in \pi'\shuffle \sigma$ is the unique permutation with $\omega(\tau')=\omega(\tau)$. This amounts to replacing the elements of $\pi$ with the elements of $\pi'$ in the same order and positions as in $\tau$.  If  instead one holds $\pi$ fixed and replaces $\sigma$ with $\sigma'$ then one obtains a bijection which we call $\widetilde{\Phi}$.\hfill $\square$
\end{defn}

For example, if $\pi=132$, $\sigma=4589$, $\tau=1453829\in \pi\shuffle \sigma$ and $\pi' = 361$, then $\Phi(\tau) =3456819$.

The following theorem establishes the shuffle compatibility of some set valued statistics. These were initially proven by Gessel, Zhuang and Gringberg in \cite{GesZhu17} and \cite{Grin2018} by lengthier and  primarily algebraic methods. An advantage of our approach is the directness and uniformity with which the results are obtained.

\begin{thm}
\label{thm-shufcompatsetd}
The set valued statistics $\Des$, $\Pk$, $\Lpk$, $\Rpk$, and $\Epk$ are all shuffle compatible.
\end{thm}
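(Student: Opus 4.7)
The plan is to use the general framework laid out in Section~3 together with Corollary~\ref{cor-screduction}(2) to reduce to the special situation where $\pi,\pi'\in L([m])$ and $\sigma\in L([n]+m)$. In this canonical setting every entry of $\sigma$ strictly exceeds every entry of $\pi$ and of $\pi'$, which makes many of the pairwise comparisons in a shuffle $\tau$ automatic. The canonical set $\Pi$ of Step~(ii) can be taken to be all of $L([m])$, so no iteration is needed: for each $\St\in\{\Des,\Pk,\Lpk,\Rpk,\Epk\}$ I would show that the fundamental bijection
\[
\Phi:\pi\shuffle\sigma\longrightarrow\pi'\shuffle\sigma
\]
from Definition~\ref{defn-fundbij} is $\St$-preserving in one step.

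To do this, I would argue that the value $\St(\tau)$ is determined entirely by the triple $(\omega(\tau),\,\St(\pi),\,\sigma)$. Since $\Phi$ preserves both $\omega(\tau)$ and $\sigma$, and since $\St(\pi')=\St(\pi)$ is part of the hypothesis, this would immediately yield $\St(\Phi(\tau))=\St(\tau)$ and hence shuffle compatibility via Corollary~\ref{cor-screduction}(2).

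The required determination is a short local case analysis. For $\Des$, at each position~$i$ one inspects the length-two factor $\omega_i\omega_{i+1}$: the factor $ab$ is never a descent and $ba$ always is (because $\sigma$-entries dominate $\pi$-entries); the factor $aa$ corresponds to two consecutive entries of $\pi$, and so is a descent of $\tau$ exactly when the relevant position lies in $\Des(\pi)$; the factor $bb$ similarly depends only on $\Des(\sigma)$, which is fixed. For $\Pk$, the same idea applied to the length-three factor $\omega_{i-1}\omega_i\omega_{i+1}$ handles all eight possibilities: $aba$ is always a peak and $bab,baa,aab$ never are; $aaa$ and $bbb$ reduce to $\Pk(\pi)$ and the fixed $\Pk(\sigma)$; while $abb$ and $bba$ reduce to internal comparisons in $\sigma$ (so depend only on the fixed~$\sigma$). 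For $\Lpk,\Rpk,\Epk$, the interior analysis is identical to the one for $\Pk$; the only new ingredient is the boundary. For example, a left peak of $\tau$ at position~$1$ is governed by the length-two prefix of $\omega(\tau)$: the prefix $ab$ is never a left peak and $ba$ always is, the prefix $bb$ depends on $1\in\Lpk(\sigma)$ (fixed), and the prefix $aa$ depends precisely on whether $1\in\Lpk(\pi)$. The analogous boundary argument handles $\Rpk$, and both boundaries together handle $\Epk$.

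The main (and only) obstacle is making sure the boundary contributions are genuinely encoded in $\Lpk(\pi),\Rpk(\pi),\Epk(\pi)$ respectively, rather than being lost because they are not captured by $\Pk(\pi)$—this is exactly the reason the augmented peak sets are the right statistics to use in those cases. Once this bookkeeping is in place, the uniformity of the five arguments means a single case-analysis template, applied to each of the five statistics with the appropriate boundary adjustments, completes the proof.
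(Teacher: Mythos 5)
Your proposal is correct and follows essentially the same route as the paper: reduce via Corollary~\ref{cor-screduction}(2), apply the fundamental bijection $\Phi$, and verify preservation by the same local case analysis on length-two and length-three factors of $\omega(\tau)$ (your eight three-letter cases match the paper's five listed peak configurations plus the three impossible ones), with the identical boundary treatment for $\Lpk$, $\Rpk$, and $\Epk=\Lpk\cup\Rpk$. No gaps.
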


\begin{proof}
By Corollary~\ref{cor-screduction}, part (2), we may reduce to showing that for $\pi,\pi'\in L([m])$ and $\sigma \in L([n]+m)$, if  we have $\St(\pi)=\St(\pi')$ then it follows that $\St(\pi\shuffle \sigma) = \St(\pi'\shuffle \sigma)$ for the five statistics listed above. 

The fundamental bijection $\Phi:\pi\shuffle \sigma \to \pi' \shuffle \sigma$ will be the map we will use for all five statistics.  Because these cases are so straightforward, we will not have to find a canonical set of permutations.

For each of these statistics, $\St\in \{\Des, \Pk, \Lpk, \Rpk, \Epk\}$, we will give a complete list of the cases that determine whether a given position will contribute to $\St(\tau)$ for a shuffle $\tau\in \pi \shuffle \sigma$. It will then be easy to check  that $\St$ will be  preserved by $\Phi$.

\vspace{.2cm}
\begin{itemize}
\item \textbf{Descent set, $\Des$}:\\
Observe that in a shuffle $\tau\in \pi\shuffle \sigma$ we have $i\in\Des\tau$ if and only if $\tau_i\tau_{i+1}$ equals one of 
\begin{enumerate}
\item $\pi_j\pi_{j+1}$ where $j\in\Des\pi$,
\item $\sigma_k\sigma_{k+1}$ where $k\in\Des\sigma$, or
\item $\sigma_k\pi_j$.
\end{enumerate}
It is now easy to check that the descent set is preserved in passing from $\tau$ to $\Phi(\tau)$.

\vspace{.2cm}
\item \textbf{Peak set, $\Pk$}:\\
For a shuffle $\tau\in \pi\shuffle \sigma$, we have $i\in \Pk\tau$ if and only if $\tau_{i-1}\tau_{i}\tau_{i+1}$ equals one of 
\begin{enumerate}
\item $\pi_{j-1}\pi_{j}\pi_{j+1}$ where $j\in \Pk \pi$,
\item $\sigma_{k-1}\sigma_{k}\sigma_{k+1}$ where $k\in \Pk \sigma$,
\item $\sigma_{k}\sigma_{k+1}\pi_j$ where $k\in \Asc \sigma$,
\item $\pi_j\sigma_{k}\sigma_{k+1}$ where $k\in \Des \sigma$,
\item $\pi_j\sigma_{k}\pi_{j+1}$.
\end{enumerate}

This makes it simple to check that a peak in $\tau$ will remain one in $\Phi(\tau)$. For example, in case 3 we have $\sigma_{k}<\sigma_{k+1}>\pi_j$ in $\tau$ so that the position of $\sigma_{k+1}$ is a peak of $\tau$. Upon replacing $\pi$ with $\pi'$ we have $\sigma_{k}<\sigma_{k+1}>\pi_j'$ since every element of  $\pi'$ is less than every element of $\sigma$. Therefore the position of $\sigma_{k+1}$ is a peak in $\Phi(\tau)$ at the same position as it was in $\tau$. 
Using similar arguments and $\Phi^{-1}$, one sees that a position that is a peak of $\Phi(\tau)$ must also be a peak of $\tau$ and so $\Phi$ is peak preserving.

\vspace{.2cm}
\item \textbf{Left peak set, $\Lpk$}:\\
For a shuffle $\tau\in \pi\shuffle \sigma$, note that we have $\Pk(\tau)\subseteq \Lpk(\tau)$ so that the above cases for the peak set show that for $i\geq 2$ we have  $i\in \Lpk(\tau)$ if and only if $i\in \Lpk(\Phi(\tau))$.
It therefore remains only to check what happens when $i=1$. But $1\in \Lpk(\tau)$ if and only if $0\tau_1\tau_2$ equals one of
\begin{enumerate}
\item $0\pi_{1}\pi_{2}$ where $1\in \Lpk(\pi)$
\item $0\sigma_1\sigma_2$ where $1\in \Lpk(\sigma)$
\item $0\sigma_1\pi_1$
\end{enumerate}

The check that left peaks at $i=1$ are preserved is similar to the arguments for $\Pk$.  So here and for the following statistics we have left this verification to the reader.

\vspace{.2cm}
\item \textbf{Right peak set, $\Rpk$}:\\
The argument is analogous to that of $\Lpk$, except that we now need additional cases at the right end of $\tau$. 
Note that $m+n\in \Rpk(\tau)$ if and only if $\tau_{m+n-1}\tau_{m+n}0$ equals one of
\begin{enumerate}
\item $\pi_{m-1}\pi_{m}0$ where $m\in \Rpk(\pi)$
\item $\sigma_{n-1}\sigma_n 0$ where $n\in \Rpk(\sigma)$
\item $\pi_m\sigma_n 0$
\end{enumerate}

\item \textbf{External peak set, $\Epk$}:\\
Since $\Epk(\tau) = \Lpk(\tau)\cup\Rpk(\tau)$ and the single bijection $\Phi$ preserves both $\Rpk$ and $\Lpk$, we have that $\Epk$ 
is also preserved under $\Phi$.
\end{itemize}

This completes the proof of the shuffle compatibility of these five statistics.\end{proof}

A couple of observations are appropriate here. First, note that if there is a single bijection which shows the shuffle compatibility of two or more permutation statistics, then it follows immediately that any tuple of these statistics is also shuffle compatible. For example, since $\Lpk$ and $\Rpk$ are both shuffle compatible by means of the bijection $\Phi$, then so  is $(\Lpk, \Rpk)$. Therefore any tuple from the above five statistics is also shuffle compatible. This gives one answer to a question of Gessel and Zhuang in \cite{GesZhu17} as to when a tuple of statistics is shuffle compatible. Second, some statistics determine others. For example $(\Des,\Pk)$ is shuffle compatible since $\Des$ is shuffle compatible and  $\Pk$ can be determined from $\Des$. 

\begin{thm}
\label{thm-shufcompatseta}
The statistics $\Asc$, $\Val$, $\Lval$, $\Rval$, and $\Eval$ are shuffle compatible.
\end{thm}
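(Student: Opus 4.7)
The plan is to mirror the proof of Theorem~4.2 almost verbatim, but in a dual setup: apply Corollary~3.2 part~(3) to reduce to the case $\pi \in L([m])$, $\sigma,\sigma' \in L([n]+m)$ with $\St(\sigma)=\St(\sigma')$, and use the bijection $\widetilde{\Phi}: \pi \shuffle \sigma \to \pi \shuffle \sigma'$ that keeps $\pi$ fixed and substitutes the entries of $\sigma$ by those of $\sigma'$ in the positions they occupy (so $\omega(\tau)$ is preserved). This choice is natural because $\Asc$, $\Val$, $\Lval$, $\Rval$, $\Eval$ are the ``inequality-reversed'' analogues of the statistics in Theorem~4.2, so making $\sigma$ the \emph{larger} permutation forces the mixed $\pi/\sigma$ patterns to contribute to these statistics in a fully determined way, mirroring how Theorem~4.2 exploited the opposite inequality direction.

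For each statistic I would enumerate the patterns in a shuffle $\tau$ that contribute at position $i$ and verify preservation by $\widetilde{\Phi}$. For $\Asc$: the contributing pairs $\tau_i\tau_{i+1}$ are $\pi_j\pi_{j+1}$ with $j \in \Asc(\pi)$, $\sigma_k\sigma_{k+1}$ with $k \in \Asc(\sigma)$, or $\pi_j\sigma_k$ (always), while $\sigma_k\pi_j$ never contributes. For $\Val$, the contributing triples $\tau_{i-1}\tau_i\tau_{i+1}$ are $\pi_{j-1}\pi_j\pi_{j+1}$ with $j \in \Val(\pi)$; $\sigma_{k-1}\sigma_k\sigma_{k+1}$ with $k \in \Val(\sigma)$; $\pi_{j-1}\pi_j\sigma_k$ with $j-1 \in \Des(\pi)$; $\sigma_{k-1}\pi_j\pi_{j+1}$ with $j \in \Asc(\pi)$; and $\sigma_{k-1}\pi_j\sigma_{k+1}$, which is always a valley. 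The remaining mixed triples ($\sigma_{k-1}\sigma_k\pi_j$, $\pi_{j-1}\sigma_k\pi_{j+1}$, $\pi_{j-1}\sigma_k\sigma_{k+1}$) never produce a valley because each would require an inequality contradicting $\pi<\sigma$. In every case the contribution depends only on data preserved by $\widetilde{\Phi}$: on $\pi$ (fixed), on $\omega(\tau)$ (preserved), or on $\Val(\sigma)$ (invariant by hypothesis).

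For $\Lval$ and $\Rval$ the same enumeration is applied, supplemented by a boundary analysis at position $1$ (for $\Lval$) and at $m+n$ (for $\Rval$). For instance $1 \in \Lval(\tau)$ iff $\tau_1 < \tau_2$; splitting on whether $\tau_1,\tau_2$ come from $\pi$ or $\sigma$ gives four subcases ($\pi\pi$ iff $1 \in \Lval(\pi)$, $\sigma\sigma$ iff $1 \in \Lval(\sigma)$, $\pi\sigma$ always, $\sigma\pi$ never), all handled by $\widetilde{\Phi}$, and the case of $\Rval$ is symmetric. Since $\Eval(\tau) = \Lval(\tau) \cup \Rval(\tau)$ and the \emph{same} bijection $\widetilde{\Phi}$ respects both, shuffle compatibility of $\Eval$ follows immediately. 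The main obstacle is purely organizational: making sure no three-letter pattern is overlooked in the $\Val$ case analysis and that each mixed inequality is checked correctly. Conceptually the argument is a clean dualization of Theorem~4.2 once one uses part~(3) of Corollary~3.2 in place of part~(2).
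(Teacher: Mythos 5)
Your proposal is correct and follows essentially the same route as the paper, which likewise invokes part~(3) of the reduction corollary, uses $\widetilde{\Phi}$ in place of $\Phi$ and $\infty$ in place of $0$, and verifies preservation by the same kind of pattern enumeration (the paper writes out only the $\Asc$ case and leaves the valley statistics to the reader, so you have simply filled in those details). The only nitpick is notational: the triple with a lone $\pi_j$ between two consecutive entries of $\sigma$ should be written $\sigma_k\pi_j\sigma_{k+1}$ rather than $\sigma_{k-1}\pi_j\sigma_{k+1}$, but this does not affect the argument.
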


\begin{proof}
This proof closely parallels that of the previous theorem except we use 
part (3) of Corollary~\ref{cor-screduction} in place of part (2),
$\widetilde{\Phi}$ in place of $\Phi$, and $\infty$ in place of $0$.
Because of the similarity, we only indicate how to do $\Asc$.

\vspace{.2cm}

Observe that in a shuffle, $\tau\in \pi\shuffle \sigma$, we have $i\in\Asc\tau$ if and only if $\tau_i\tau_{i+1}$ equals one of 
\begin{enumerate}
\item $\pi_j\pi_{j+1}$ where $j\in\Asc \pi$,
\item $\sigma_k\sigma_{k+1}$ where $k\in\Asc\sigma$, or
\item $\pi_j\sigma_k$.
\end{enumerate}
It is now easy to check that the ascent set is preserved in passing from $\tau$ to $\widetilde{\Phi}(\tau)$.
\end{proof}

\section{The major index}

For the next proof, we need to introduce a labeling on the spaces of a permutation. Let $\pi$ be a permutation of length $m$ with $\des(\pi)=k$.  Then by a {\em space} of $\pi$ we mean the gap between two adjacent elements of $\pi$. There is, by convention, an initial  space before the first element of $\pi$ and a  final space after the last element of the permutation.  Label these spaces by assigning the right-most (final) space the label $0$ then labeling the spaces after descents of $\pi$ with the integers in $[k]$ from right to left, then labeling the remaining spaces with the integers in $[k+1,m]$ from left to right. 
Equivalently, we label the spaces of $\pi$ corresponding to descents of $0\pi0$ from right to left, and then the spaces of $\pi$ corresponding to ascents of $0\pi0$ from left to right using the elements of $[0,m]$.  In what follows we make no distinction between a space and its label.
For example if $\pi=265781$ then the labeled permutation is
$${}^3 2^4 6^2 5^5 7^6 8^1 1^0$$ 
with the raised numbers being the labels of the spaces. 
If $\pi_i$ and $\pi_{i+1}$ are the elements on either side of space $x$ then we say there is a {\em descent} or {\em ascent at space $x$} if $i\in\Des(\pi)$ or $i\in\Asc(\pi)$, respectively.

It is well known that inserting a number greater than $\max \dom(\pi)$ in space $i$ increases $\maj\pi$ by $i$.  Continuing our example, inserting $9$ in space $4$ of $\pi$ gives the permutation $2965781$ with $\maj(2965781)=11=\maj(265781) +4$. This fact 
is used in one of the standard proofs that the generating function for $\maj$ over the permutations of $[n]$ is $[n]_q!$. We will now see that this is a crucial tool for proving certain shuffle compatibility results.

\begin{thm}
\label{thm-desshufcompat}
The permutation statistics $\des$ and $(\maj,\des)$  are shuffle compatible.
\end{thm}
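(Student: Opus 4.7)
My plan is to derive the $\des$ case from the $(\maj,\des)$ case, since $\des$ is recoverable from $(\maj,\des)$, and to handle $(\maj,\des)$ via the three-step template of Section~3. Applying Corollary~\ref{cor-screduction}(2) reduces the problem to the setup with $\pi,\pi' \in L([m])$ satisfying $(\maj(\pi),\des(\pi)) = (\maj(\pi'),\des(\pi'))$ and $\sigma \in L([n]+m)$ fixed; under this reduction every element of $\sigma$ exceeds every element of $\pi$, so each shuffle $\tau \in \pi\shuffle\sigma$ corresponds to a composition $(n_0,\ldots,n_m)$ of $n$ recording how many entries of $\sigma$ fall in each labeled space of $\pi$. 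The insertion rule recalled just before the theorem provides the essential control: inserting an entry at the space labeled $\ell$ adds $\ell$ to $\maj$, and adds $1$ to $\des$ precisely when $\ell$ is ``ascent-type'' (i.e., $\ell \in \{\des(\pi)+1,\ldots,m\}$), as opposed to ``descent-type'' ($\ell \in \{0,1,\ldots,\des(\pi)\}$).

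For step (ii) I take the canonical family $\Pi$ to consist of one distinguished permutation $\pi_{d,j} \in L([m])$ for each realizable pair $(d,j)=(\des,\maj)$; two canonical permutations sharing $(\des,\maj)$ are then literally equal, so shuffle compatibility inside $\Pi$ is automatic. In the easier subcase of step (iii) where one can select a reduction target $\pi''$ with $\Des(\pi'') = \Des(\pi)$, the fundamental bijection $\Phi$ of Definition~\ref{defn-fundbij} is already $(\maj,\des)$-preserving: a case analysis parallel to that in Theorem~\ref{thm-shufcompatsetd}, combined with the insertion rule, shows that every descent-contribution to $(\maj(\tau),\des(\tau))$ depends only on the composition and on $\Des(\pi)$, both of which $\Phi$ preserves.

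The main obstacle, constituting the substance of step (iii), is the general subcase in which any $\pi''$ one step closer to canonical must differ from $\pi$ in descent set. My plan is to construct ``paired local swaps'' on $\pi$ that simultaneously shift one descent position by $+\Delta$ and another by $-\Delta$, so that both $\des$ and $\maj$ are preserved, and to lift each such swap to shuffles by composing two $\Phi$-type moves with compensating adjustments that track how $\sigma$-insertions interact with the two swapped loci. Proving this lift is $(\maj,\des)$-preserving on every shuffle will require a careful position-bookkeeping argument within the labeled-space framework, since one can verify on small examples that a naive ``match by label multiset'' map shifts the positions of preserved $\pi$-descents and of $\sigma$-related descents by amounts that do not cancel in general. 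Once this reduction is established, iterating it terminates at $\Pi$ and composes to the desired $(\maj,\des)$-preserving bijection.
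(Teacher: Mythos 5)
There is a genuine gap: the entire content of your step (iii) in the hard case is deferred rather than proved. You propose ``paired local swaps'' that move one descent of $\pi$ by $+\Delta$ and another by $-\Delta$ so as to preserve $(\maj,\des)$ exactly at each reduction step, and then you state that lifting such a swap to shuffles ``will require a careful position-bookkeeping argument'' --- while also observing that the naive lift fails. That lift is precisely the theorem; without it you have only a plan. The easier subcase you do handle (where $\Des(\pi'')=\Des(\pi)$, so $\Phi$ applies) makes no progress, since permutations with equal descent sets are already covered by Theorem~\ref{thm-shufcompatsetd}, and the problematic pairs are exactly those with equal $(\maj,\des)$ but different descent sets.

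The idea you are missing is that the reduction step need not preserve $(\maj,\des)$; it only needs to preserve $\des$ and change $\maj$ of \emph{every} shuffle by the same predictable amount. The paper works with Corollary~\ref{cor-screduction}(3) rather than (2): it fixes $\pi$ and reduces $\sigma,\sigma'\in L([n]+m)$ to the canonical set $\Pi=\{\sigma : \Des(\sigma)=[p]\}$ by repeatedly replacing a peak descent at position $i$ with one at $i-1$. Each such step lowers $\maj(\sigma)$ by $1$, and the accompanying bijection $\Theta$ on shuffles (remove the unique $\sigma$-element $\sigma_i$ from the block of $\tau$ between $\sigma_{i-1}$ and $\sigma_{i+1}$ and reinsert it one labeled space earlier) satisfies $\Des(\tau'')=(\Des(\tau)\setminus\{j+1\})\cup\{j\}$, hence preserves $\des$ and lowers $\maj$ by exactly $1$. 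Since $\maj(\sigma)=\maj(\sigma')$, both reduction chains shift the $\maj$-distribution by the identical constant $\maj(\sigma)-\binom{p+1}{2}$ and land on permutations with the same descent set $[p]$, whence $\Des$-shuffle-compatibility finishes the argument. This relaxation is what lets a single descent march leftward one step at a time; it eliminates the need for your compensating second swap, which is where your construction stalls. (Choosing the large permutation $\sigma$ as the one being modified is also what makes the space-labeling insertion lemma apply cleanly, since $\sigma_i$ dominates everything in its block.)
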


\begin{proof}
Our first step is to use Corollary~\ref{cor-screduction} part (3) to reduce to showing that $\pi \in L(m)$ and $\sigma, \sigma'\in L([n]+m)$ with $\St(\sigma)=\St(\sigma')$ implies $\St(\pi\shuffle \sigma) = \St(\pi \shuffle \sigma')$ for each $\St\in \{\des, (\maj,\des)\}$.
The core of the proof is the existence of certain bijections that preserve $\des$, lower $\maj$ by one, and allow us to replace $\sigma$ with a permutation that is closer to being in our chosen set of canonical permutations, as outlined in the general approach.
For the permutations with $\des\sigma=p$ we will use the canonical set
$$\Pi = \{\sigma\in L([n]+m) \mid \Des(\sigma)=[p]\} $$ 
which consists of the permutations with a sequence of $p$ descents followed by a sequence of ascents. Given two permutations $\sigma,\sigma'\in \Pi$, we know by Theorem~\ref{thm-shufcompatsetd} that $\Des(\pi\shuffle \sigma)=\Des(\pi \shuffle \sigma')$ and hence the same holds for any descent statistic. This shows that part (ii) of the general approach is satisfied.

Our measure of how close a permutation is to being in $\Pi$ is $d: L([n]+m)\to \bbN$ given by $d(\sigma) =\maj(\sigma)$.   Note that among all permutations in $L([n]+m)$ with $\des\sigma=p$, those in $\Pi$ have the minimum possible $\maj$, namely $\binom{p+1}{2}$.  Our strategy will be to find a bijection between shuffles $\tau\in\pi\shuffle\sigma$ and the shuffles of $\pi$ with an element of $\Pi$  which preserves $\des$ and lowers the major index of each $\tau$  by the same amount, namely $\maj(\sigma)-\binom{p+1}{2}$.  This will prove the theorem.

To reduce a permutation to one in $\Pi$ we will move their descents to the left one position at a time. More specifically, if $\sigma \not\in \Pi$ then there is at least one position $i\geq 2$ such that $\sigma_{i-1}<\sigma_i>\sigma_{i+1}$. Let $\sigma''$ be any permutation such that 
$$\Des(\sigma'') = \left(\Des(\sigma)\setminus \{i\}\right)\cup \{i-1\}.$$
Note that this preserves $\des$ but lowers $\maj$ by one in passing from $\sigma$ to $\sigma''$ . The bijection we will define between $\pi \shuffle \sigma$ and $\pi \shuffle \sigma''$ will  have the same properties and so, by iteration, complete the proof.

For $\tau \in \pi\shuffle \sigma$, write  $\tau$ as a concatenation $\tau=\tau^a \tau^b \tau^c$ where $\tau^b$ is the factor of $\tau$ between
but not including $\sigma_{i-1}$ and $\sigma_{i+1}$. Then $\tau^a$ and $\tau^c$ are the remaining initial and final factors of $\tau$, respectively. Note that there is exactly one element of $\sigma$ in $\tau^b$ and that it is larger than all the elements of $\pi$. Consider the permutation $\delta$ that is $\tau^b$ with $\sigma_i$ removed. All spaces will be spaces of $\delta$. Let $x$ be the space of $\delta$ from which $\sigma_i$ was removed from and set $(\tau^{b})''$ to be the permutation $\delta$ with $\sigma_i$ inserted into the space $x-1$ where $x-1$ is taken modulo  $|\delta|+1$.

Define a map 
$\Theta: \pi \shuffle \sigma \to \pi \shuffle \sigma''$
by
 $$\Theta(\tau)=\tau''$$
 where $\tau''$ is the unique element of $\pi \shuffle \sigma''$ such that $\omega(\tau'') = \omega(\tau^a(\tau^b)''\tau^c)$.

We now show that $\Theta$ has the desired properties, namely that it is $\des$ preserving and satisfies $\maj(\Theta(\tau)) = \maj(\tau)-1$. There are two cases to check, based on the label $x$ of the original  space that $\sigma_i$ occupied.

\begin{enumerate}
\item  If $1\leq x\leq \des(\delta)+1$ :\\
First note that in this case $\delta$ cannot be empty since $x$ exists in this range.
Let $\tau_j$ be the element of $\tau$ directly before $\sigma_i$ and let $\tau_k$ be the be the element of $\tau$ directly before  space $x-1$ of $\delta$.

The map $\Theta$ removes $\sigma_i$ from the position after $\tau_j$ and inserts $\sigma_i''$ in the position directly after $\tau_k$ while changing the order relation from $\sigma_{i-1}<\sigma_{i}>\sigma_{i+1}$ to $\sigma_{i-1}''>\sigma_{i}''<\sigma_{i+1}''$. There are no descents, $l$, in $\tau$ with $j+2\leq l \leq k-1$. Therefore we only have to analyze what happens  at positions $j,\ j+1,\ k-1$, and $k$. First, assume that $j+2 \not=k$.

An illustration of this case in a generic small example is shown in Figure~\ref{fig-desmaj1}. The left picture is an example of the initial state of $\tau^b$ and the right picture is of the resulting part of $\tau''$ after applying $\Theta$. Each node represents an element of $\tau$ or $\tau''$ and the lines connecting them represent the order relation between adjacent elements. For example, a line with positive slope corresponds to the first element being smaller than the second. 

\begin{itemize}
\item In $\tau$:
	\begin{itemize}
	\item $j\not\in \Des(\tau)$:\\
	The position $j$ is never a descent of $\tau$ since $\tau_{j+1}=\sigma_i$, and $\tau_j$ is either $\sigma_{i-1}$  or  in $\pi$.  In both cases $\tau_j<\tau_{j+1}$.
	\vspace{.2cm}
	
	\item $j+1\in \Des(\tau)$:\\
		The position $j+1$ is always a descent of $\tau$ since $\tau_{j+1} = \sigma_i$ 
		and $\tau_{j+2}\in \pi$ because of the range of $x$.
		\vspace{.2cm}
	\item $k-1\not \in \Des(\tau)$.\\
		Since $j+2\not=k$,  the definition of the space labeling and the range of $x$ show that $k-1$ is an ascent of $\tau$.		\vspace{.2cm}

	\item $k\in \Des(\tau)$ if and only if $x\neq 1$.\\
		If $x=1$, then $\tau_k\in \pi$ and $\tau_{k+1} = \sigma_{i+1}$ and so $k$ is an ascent of $\tau$. On the other hand, if $x\not=1$, then $k$ is a descent of $\tau$ by the definition of the space labeling and the range of $x$.
\vspace{.2cm}

	\end{itemize}
\item In $\tau''$: 
	\begin{itemize}
	\item  $j\in \Des(\tau'')$:\\
	The position $j$ is always a descent of $\tau''$ since $\tau_{j+1}''\in \pi$,  and either $\tau_j''=\sigma_{i-1}''$ or $\tau_j''\in\pi$ with $j$  corresponding to the descent at space $x$ of $\delta$.
			\vspace{.2cm}
			
	\item $j+1\not \in \Des(\tau'')$:\\
	The position $j+1$ is never a descent of $\tau''$ since  $\tau_{j+1}''\in \pi$, and either $\tau_{j+2}''=\sigma_i''$ or $\tau_{j+2}''\in\pi$ with $j+1$ corresponding to an ascent of $\delta$ by the definition of the space labeling and the range of $x$.
			\vspace{.2cm}
			
	\item $k-1 \not\in \Des(\tau'')$ \\
		Since $j+2\not=k$ we have $\tau_{k-1}''\in\pi$ and $\tau_k''=\sigma_i''$ so that $k-1$ is an ascent.
	\vspace{.2cm}
	
	\item $k\in \Des(\tau'')$ if and only if $x\neq 1$.\\
	We have $\tau_{k}''=\sigma_i''$. If $x=1$ then $\tau_{k+1}''=\sigma_{i+1}''$ and hence $k\not\in \Des(\tau'')$ by the choice of $\sigma''$. On the other hand if $x\not=1$, then $\tau_{k+1}''\in \pi$ and hence $\sigma_i''>\tau_{k+1}''$.

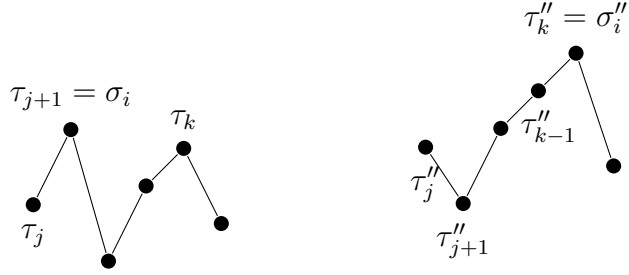
\begin{figure}
\label{fig-desmaj1}
\centering
\begin{tikzpicture}
[blackdot/.style={circle, fill=black,thick, inner sep=2pt, minimum size=0.2cm}, scale=.5] 
     \tikzstyle{edge_style} = [draw=black]
     \node (v1) at (-3,0.5) [blackdot,label=below:$\tau_j$]{};
     \node (v2) at (-2,2.5) [blackdot, label=above:{$\tau_{j+1}=\sigma_i$}]{};
     \node (v3) at (-1,-1) [blackdot]{};
     \node (v4) at (0,1) [blackdot]{};
     \node (v5) at (1,2) [blackdot, label=above:$\tau_k$]{};
     \node (v6) at (2,0) [blackdot]{};
     \draw[edge_style] (v1) to (v2);
     \draw[edge_style] (v2) to (v3);
     \draw[edge_style] (v3) to (v4);
     \draw[edge_style] (v4) to (v5);
     \draw[edge_style] (v5) to (v6);
\end{tikzpicture} \hspace{2cm}
\begin{tikzpicture}
[blackdot/.style={circle, fill=black,thick, inner sep=2pt, minimum size=0.2cm}, scale=.5] 
     \tikzstyle{edge_style} = [draw=black]
     \node (v1) at (-2,0.5) [blackdot,label=below:$\tau_j''$]{};
     \node (v2) at (-1,-1) [blackdot, label=below:$\tau_{j+1}''$]{};
     \node (v3) at (0,1) [blackdot]{};
     \node (v4) at (1,2) [blackdot, label=below:$\rule{8pt}{0pt}\tau_{k-1}''$]{};
     \node (v5) at (2,3) [blackdot, label=above:{$\tau_{k}'' = \sigma_i''$}]{};
     \node (v6) at (3,0) [blackdot]{};
     \draw[edge_style] (v1) to (v2);
     \draw[edge_style] (v2) to (v3);
     \draw[edge_style] (v3) to (v4);
     \draw[edge_style] (v4) to (v5);
     \draw[edge_style] (v5) to (v6);
\end{tikzpicture}
\caption{A schematic drawing for the case when $1\leq x\leq \des(\delta)+1$ and $j+2\not=k$.}
\end{figure}

	\end{itemize}
\end{itemize} 

When $j+2=k$, we have the same two lists but with the item concerning $k-1$ removed since $k-1=j+1$ and so the item concerning $j+1$ covers this case.

Comparing the two lists, we have
\begin{equation}
\label{tau''}
\Des(\tau'') = \left(\Des(\tau)\setminus \{j+1\}\right)\cup \{j\}
\end{equation}
and, in particular, $\des(\tau'')=\des(\tau)$. This relation between descent sets also makes it clear that $\maj(\tau'') = \maj(\tau)-1$.

\item If $\des(\delta)+2\leq x \leq |\delta|+1$  modulo $|\delta|+1$:\\
Define $\tau_j$ and $\tau_k$ as before.

Again, the map $\Theta$  removes $\sigma_i$ from the position after $\tau_j$ and inserts $\sigma_i''$ in the position directly after $\tau_k$ while changing the order relation from $\sigma_{i-1}<\sigma_{i}>\sigma_{i+1}$ to $\sigma_{i-1}''>\sigma_{i}''<\sigma_{i+1}''$. Note however, that it is now possible for $\delta$ to be empty.
In that case $\Theta$ just changes the peak at $\sigma_i$ at position $j+1$ in $\tau$ to a valley in $\tau''$.  So clearly equation~(\ref{tau''}) still holds.

Therefore assume $\delta\neq \emptyset$ and note that all positions strictly between $k$ and $j$ will be descents in $\tau$.  It is easy to see that these positions will remain descents after applying $\Theta$. 
So we only need to check what happens at positions $ j, j+1$ and $k$. 

An illustration of this case is given in Figure \ref{fig-desmaj2}.  The left picture is an illustration of an example of the initial state of $\tau^b$ and the right picture is of the resulting part $\tau''$ after applying $\Theta$.

\begin{itemize}
\item In $\tau$:
	\begin{itemize}
	\item $j\not\in \Des(\tau)$:\\
	The position $j$ is never a descent of $\tau$ since  $\tau_j \in  \pi$ and $\tau_{j+1}=\sigma_i$ by definition.
	\vspace{.2cm}
	
	\item $j+1\in \Des(\tau)$:\\
		The position $j+1$ is always a descent of $\tau$ since $\tau_{j+1} = \sigma_i$ 
		and $\tau_{j+2}$ is $\sigma_{i+1}$ or is in $\pi$. In both cases $\tau_{j+1}>\tau_{j+2}$.
		\vspace{.2cm}
		
	\item $k \in \Des(\tau)$ if and only if $x= \des(\delta)+2$.\\
	If $x=\des(\delta)+2$, then $x-1=\des(\delta)+1$ is the the space before $\delta$. Hence $\tau_k=\sigma_{i-1}$ and $\tau_{k+1}\in \pi$, so $k \in \Des(\tau)$. On the other hand, if $x\not=\des(\delta)+2$, then $k$ is an ascent of $\delta$ and hence $\tau$ by the definition of the space labeling and the range of $x$.
	
	\end{itemize}
	
\item In $\tau''$:
	\begin{itemize}	
	\item $j\in \Des(\tau'')$:\\
	The position $j$ is always a descent of $\tau''$ since $\tau_{j+1}''\in\pi$, and either $\tau_{j}''=\sigma_{i}''$ or $\tau_{j}''\in\pi$ and $j$ corresponds to the descent of $\delta$ at the space previous to $x$.
	\vspace{.2cm}
				
	\item $j+1\not \in \Des(\tau'')$:\\
The position $j$ is never a descent of $\tau''$ since $\tau_{j+1}''\in\pi$, and either $\tau_{j+2}''=\sigma_{i+1}''$ or $\tau_{j+2}''\in\pi$ and $j+1$ corresponds to 
the ascent of $\delta$ at space $x$.
			\vspace{.2cm}
			
	\item $k \in \Des(\tau'')$ if and only if $x=\des(\delta)+2$.\\	
	If $x=\des(\delta)+2$, then $x-1=\des(\delta)+1$ is the the space before $\delta$. Hence $\tau_k''=\sigma_{i-1}''$ and $\tau_{k+1}''=\sigma_i$, so $k \in \Des(\tau)$. On the other hand,  if $x\not=\des(\delta)+2$ then $\tau_{k}''\in \pi$ and $\tau_{k+1}''=\sigma_i''$, so that $k\not \in \Des(\tau'')$.
	
	\end{itemize}
\end{itemize} 

Thus in this case as well equation~(\ref{tau''}) continues to hold.
\end{enumerate}

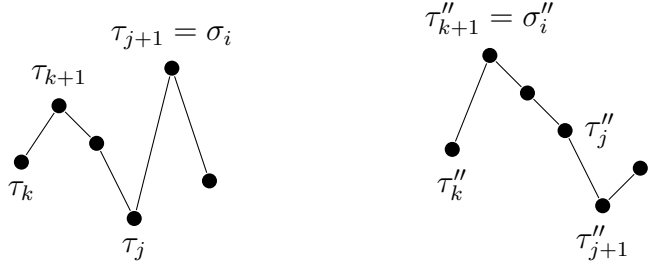
\begin{figure}

\centering
\begin{tikzpicture}
[blackdot/.style={circle, fill=black,thick, inner sep=2pt, minimum size=0.2cm}, scale=.5] 
     \tikzstyle{edge_style} = [draw=black]
     \node (v1) at (-3,0.5) [blackdot,label=below:$\tau_k$]{};
     \node (v2) at (-2,2) [blackdot, label=above:{$\tau_{k+1}$}]{};
      \node (v3) at (-1,1) [blackdot]{};
     \node (v4) at (0,-1) [blackdot, label=below:{$\tau_{j}$}]{};
     \node (v5) at (1,3) [blackdot, label=above:{$\tau_{j+1}=\sigma_i$}]{};
     \node (v6) at (2,0) [blackdot]{};
     \draw[edge_style] (v1) to (v2);
     \draw[edge_style] (v2) to (v3);
     \draw[edge_style] (v3) to (v4);
     \draw[edge_style] (v4) to (v5);
     \draw[edge_style] (v5) to (v6);
\end{tikzpicture} \hspace{2cm}
\begin{tikzpicture}
[blackdot/.style={circle, fill=black,thick, inner sep=2pt, minimum size=0.2cm}, scale=.5] 
     \tikzstyle{edge_style} = [draw=black]
     \node (v1) at (-3,0.5) [blackdot,label=below:$\tau_k''$]{};
     \node (v2) at (-2,3) [blackdot, label=above:{$\tau_{k+1}''=\sigma_i''$}]{};
     \node (v3) at (-1,2) [blackdot,]{};
      \node (v4) at (-0,1) [blackdot, label=right:{$\tau_{j}''$}]{};
     \node (v5) at (1,-1) [blackdot, label=below:{$\tau_{j+1}''$}]{};
     \node (v6) at (2,0) [blackdot]{};
     \draw[edge_style] (v1) to (v2);
     \draw[edge_style] (v2) to (v3);
     \draw[edge_style] (v3) to (v4);
     \draw[edge_style] (v4) to (v5);
     \draw[edge_style] (v5) to (v6);
\end{tikzpicture} \hspace{2cm}

\caption{A schematic drawing for the case when $\des(\delta)+2\leq x \leq |\delta|+1$  modulo $|\delta|+1$.}
\label{fig-desmaj2}
\end{figure}

This finishes the proof of the shuffle compatibility of $\des$ and $(\maj,\des)$. 
\end{proof}

In the above proof we proceeded to reduce permutations by moving descents to the left  as far as we could. However, to show the shuffle compatibility of $\maj$ we must reduce the permutations even further since permutations with the same value of $\maj$ may have different numbers of descents.

\begin{thm}
\label{thm-majsc}
The permutation statistic $\maj$ is shuffle compatible.
\end{thm}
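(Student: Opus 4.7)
By Corollary~\ref{cor-screduction} part~(3), it suffices to fix $\pi \in L([m])$ and show that for any $\sigma, \sigma' \in L([n]+m)$ with $\maj(\sigma) = \maj(\sigma')$, the distributions $\maj(\pi \shuffle \sigma)$ and $\maj(\pi \shuffle \sigma')$ agree. Following the general approach of Section~3, I will choose, for each $q \geq 0$, a canonical permutation $\sigma^*_q$ with $\maj(\sigma^*_q) = q$, and construct a $\maj$-preserving bijection $\pi \shuffle \sigma \to \pi \shuffle \sigma^*_{\maj(\sigma)}$. Composing this with the analogous bijection for $\sigma'$ finishes the argument.

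A natural choice for the canonical form is the permutation whose descent set is the greedy decomposition of $q$ from the top of $[n-1]$: include $n-1$ if $q \geq n-1$, then recurse on $[n-2]$ with the remainder. This yields the unique permutation of fewest descents with $\maj = q$. The bijection $\Theta$ of Theorem~\ref{thm-desshufcompat} shifts both $\maj(\sigma)$ and $\maj(\tau)$ by $-1$, and its inverse shifts both by $+1$, so composing a left-shift at one descent of $\sigma$ with a right-shift at another produces a bijection that preserves $\maj(\sigma)$ \emph{and} $\maj(\tau)$ simultaneously, while rearranging $\Des(\sigma)$ within a fixed value of $\des$. To cross between different values of $\des$ --- necessary since permutations of equal $\maj$ may have different numbers of descents --- one needs additional $\maj$-preserving bijections that merge or split descents of $\sigma$.

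The main obstacle is constructing such a merge/split bijection at the shuffle level. A prototypical merge would replace two descents $\{i,i+1\} \subseteq \Des(\sigma)$ by a single descent $\{2i+1\}$ in $\Des(\sigma'')$, which preserves $\maj(\sigma) = i + (i+1) = 2i+1$. Lifting this to a bijection $\pi \shuffle \sigma \to \pi \shuffle \sigma''$ that preserves $\maj(\tau)$ will require a detailed case analysis on the interleaving of $\pi$-elements with the factor of $\tau$ covering $\sigma_i, \sigma_{i+1}, \sigma_{i+2}$, in the spirit of the proof of Theorem~\ref{thm-desshufcompat} but more intricate because two descents are handled simultaneously. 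The key subtlety is that the changes in the internal $\sigma$-descent contributions to $\maj(\tau)$, in the cross-descent contributions (positions where a $\sigma$-element is immediately followed by a $\pi$-element in $\tau$), and in the internal $\pi$-descent contributions must collectively cancel. Once this merging bijection is in place, iterating it together with the paired $\Theta$-moves reduces every $\sigma$ to $\sigma^*_{\maj(\sigma)}$, and composing with the analogous reduction for $\sigma'$ gives the desired equality $\maj(\pi \shuffle \sigma) = \maj(\pi \shuffle \sigma')$.
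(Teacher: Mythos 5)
There is a genuine gap, and you have in fact pointed at it yourself: the merge/split bijection at the shuffle level is never constructed, only asserted to follow from ``a detailed case analysis.'' That construction is precisely the hard content of the theorem under your plan. A $\maj$-preserving bijection $\pi\shuffle\sigma\to\pi\shuffle\sigma''$ in which $\Des(\sigma'')$ replaces two descents $\{i,i+1\}$ by the single descent $\{2i+1\}$ must reconcile three interacting contributions to $\maj(\tau)$ (internal $\sigma$-descents, internal $\pi$-descents, and the crossing descents $\sigma_k\pi_j$), and the positions of the crossing descents depend on where the $\pi$-elements are interleaved, which changes when you delete a descent of $\sigma$. Nothing in your sketch shows these effects cancel, and it is not clear that a bijection with exactly these properties exists; you would also need to argue that shift moves plus adjacent-pair merges always reach your greedy canonical form (note that the target position $2i+1$ need not lie in $[n-1]$). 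As written, the proposal reduces the theorem to an unproved claim that is at least as hard as the theorem itself.

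The paper's proof avoids the need for any interior merge by weakening the requirement on the bijection: instead of preserving $\maj(\tau)$, each step is only required to \emph{decrease} $\maj(\tau)$ by exactly $1$ while decreasing $\maj(\sigma)$ by $1$. The canonical set is then the single increasing permutation ($\Des(\sigma)=\emptyset$), and descents are marched leftward with the already-established map $\Theta$ of Theorem~\ref{thm-desshufcompat} until they reach position $1$, at which point they are annihilated by a prepend--apply-$\Theta$--strip construction ($\widetilde{\Theta}=\iota''^{-1}\circ\Theta\circ\iota$, where $\iota$ prefixes a new maximal element). After the full reduction every shuffle's $\maj$ has dropped by the constant $\maj(\sigma)=\maj(\sigma')$, so the two original distributions coincide. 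If you want to salvage your outline, the cleanest fix is to adopt this relaxation: you do not need a $\maj(\tau)$-preserving step, only one whose effect on $\maj(\tau)$ is a constant shift matching the change in $\maj(\sigma)$.
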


\begin{proof}
As in the previous proof, our first step is to use Corollary~\ref{cor-screduction} to reduce to showing that $\pi \in L(m)$ and $\sigma, \sigma'\in L([n]+m)$ with $\maj(\sigma)=\maj(\sigma')$ implies $\maj(\pi\shuffle \sigma) = \maj(\pi \shuffle \sigma')$.
We will use the same canonical permutation for every element of $L([n]+m)$ by letting
$$\Pi = \{\sigma\in L([n]+m) \mid \Des(\sigma)=\emptyset\}. $$ 
This set contains the unique increasing permutation.
Our measure of how close a permutation is to being in $\Pi$ is $d: L([n]+m)\to \bbN$ given by $d(\sigma) =\maj(\sigma)$.   Observe that $\sigma\in \Pi$ if and only if $\maj(\sigma)=0$. Our strategy will be to find a bijection between shuffles $\tau\in\pi\shuffle\sigma$ and shuffles with the element of $\Pi$  which lowers the major index of each $\tau$ by the same amount, namely 
$\maj(\sigma)$.

To reduce a permutation to one in $\Pi$ we will move their descents to the left one position at a time until they are moved to position $0$ at which point they vanish. More precisely, if $\sigma \not\in \Pi$ then there exists at least one position $i\in \Des(\sigma)$ such that $i-1\not\in \Des(\sigma)$ and we  allow the case $i=1$. Fix any such $i$ and let $\sigma''$ be any permutation such that 
$$\Des(\sigma'') = 
\begin{cases}
\left(\Des(\sigma)\setminus \{i\}\right)\cup \{i-1\} & \text{ if } i\geq 2,\\
\Des(\sigma)\setminus \{1\} & \text{ if } i=1.
\end{cases} $$

The map $\Theta$ from the proof of Theorem~\ref{thm-desshufcompat} suffices when $i\geq 2$, so we need only give a bijection $\widetilde{\Theta}:\pi\shuffle \sigma \to \pi \shuffle \sigma ''$ for the case $i=1$ such that the image $\tau''=\widetilde{\Theta}(\tau)$ satisfies 
\begin{equation}
\label{tau''2}
\maj(\tau'') =\maj(\tau)-1.
\end{equation}
This means that if there is a descent at position $1$ then we need a bijection which reduces $\maj$ by one  by changing that descent to an ascent.
Set 
$$\widetilde{\sigma} = m+1,\sigma_1+1,\sigma_2+1,\ldots,\sigma_n+1\in L([n+1]+m)$$
and 
$$\widetilde{\sigma}'' = m+n+1,\sigma_1'',\sigma_2'',\ldots, \sigma_n''\in L([n+1]+m).$$

For a permutation $\sigma$, let
\begin{equation}
\label{eqn-ssigmadef}
S_\sigma = \{\tau\in \pi \shuffle \sigma \mid \tau_1=\sigma_1 \}
\end{equation}
be the subset of $\pi\shuffle \sigma$ whose elements all have $\sigma_1$ in the first position.
There is a natural bijection $\iota: \pi\shuffle \sigma \to S_{\widetilde{\sigma}}$. Namely, for $\tau\in \pi\shuffle \sigma$, let  
$\iota(\tau) = (m+1)\widetilde{\tau}$ where $\widetilde{\tau}$ is the unique permutation such that $\omega(\tau)=\omega(\widetilde{\tau})$, i.e., $\widetilde{\tau}$ is $\pi\shuffle\sigma$ with all elements of $\sigma$ increased by $1$. There is an analogous map $\iota'':\pi\shuffle \sigma''\rightarrow S_{\widetilde{\sigma}''}$.

Note that $\Theta(S_{\widetilde{\sigma}}) = S_{\widetilde{\sigma}''}$ since 
$$
\Des(\widetilde{\sigma}'')=(\Des(\widetilde{\sigma})\setminus\{2\})\cup \{1\}.
$$
It follows that we can define $\widetilde{\Theta}$ by insisting that the following diagram commutes
$$\begin{tikzpicture}
\node (A1) {$\pi\shuffle \sigma$};
\node (A2) [right of=A1, xshift=2cm] {$S_{\widetilde{\sigma}}$};
\node (A3) [below of=A1] {$\pi \shuffle \sigma''$};
\node (A4) [below of=A2] {$S_{\widetilde{\sigma}''}$};
\node (A5) [right of=A2, xshift=-1cm] {$\subseteq$};
\node (A6) [right of=A5, xshift=-0.8cm] {$\pi\shuffle\widetilde{\sigma}$};
\node (A7) [right of=A4, xshift=-1cm] {$\subseteq$};
\node (A8) [right of=A7, xshift=-0.8cm] {$\pi \shuffle \widetilde{\sigma}''$};
\draw[->](A1) to node{$\iota$}  (A2);
\draw[->] (A1) to [swap]node{$\widetilde{\Theta}$} (A3);
\draw[->] (A3) to node{$\iota''$} (A4);
\draw[->] (A2) to node{$\Theta$} (A4);
\end{tikzpicture}
$$
In other words, we define $\widetilde{\Theta} = \iota''^{-1}\circ\Theta \circ \iota$ which is clearly bijective.

To finish, it suffices to show that $\widetilde{\Theta}$ reduces $\maj$ by $1$.
First of all, observe that we have  
$$\maj(\iota(\tau)) = \begin{cases}
\maj(\tau)+\des(\tau) & \text{ if } \tau_1=\sigma_1\\
\maj(\tau)+\des(\tau)+1 & \text{ if } \tau_1=\pi_1\\
\end{cases}$$
 since each descent is shifted to the right by one position, and if $\tau_1=\pi_1$ there is an additional descent at position 1. By the previous theorem, $\maj(\Theta(\iota(\tau)))=\maj(\iota(\tau))-1$. Also 
$$\des(\Theta(\iota(\tau))) = \des(\iota(\tau))=\begin{cases}
\des(\tau) & \text{ if } \tau_1=\sigma_1\\
\des(\tau)+1 & \text{ if } \tau_1=\pi_1\\
\end{cases}$$
 since, by the previous theorem, $\Theta$ was $\des$ preserving.
Finally, for an element $\tau''\in \pi \shuffle \widetilde{\sigma}''$ we have
$$\maj(\iota''^{-1}(\tau'')) = \maj(\tau'')-\des(\tau'')$$
since each descent of $\tau''$ is moved to the left by one position in $\iota''^{-1}(\tau'')$.

Thus, regardless of the first element of $\tau$,
\begin{align*}
\maj(\iota''^{-1}(\Theta(\iota(\tau))) &= \maj(\Theta(\iota(\tau))-\des(\Theta(\iota(\tau))\\
&=(\maj(\iota(\tau))-1)-\des(\Theta(\iota(\tau))\\
&=\maj(\tau)-1.
\end{align*}
which finishes the proof.
\end{proof}

We can now  recover the identity for the distribution of $\maj$ over the shuffle set. 
We start with a  well-known result whose proof can be found in \cite{Bona2016} Section 2.2.2. B\'ona's treatment deals with rearrangements of a multiset containing ones and twos, which is easily seen to be equivalent to shuffling two words, the first consisting only of ones and the second only of twos. One defines $\maj$ in the same way for words with repeated numbers.

\begin{thm}
\label{thm-maj_ident}
Let $e =\overbrace{11\ldots 1}^m$ and $f=\overbrace{22\ldots 2}^n$ be words of length $m$ and $n$ respectively. Then
\begin{equation}
\label{eqn-base_maj_identity}
\sum \limits_{\tau \in e \shuffle f} q^{\maj(\tau)}= {m+n \brack m}_q.
\end{equation}
\end{thm}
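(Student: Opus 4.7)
The plan is to reduce the statement to the classical equidistribution of $\maj$ and $\inv$ on multiset permutations. For a shuffle $\tau \in e \shuffle f$, an inversion is a pair $i < j$ with $\tau_i > \tau_j$; since $\tau$ uses only the letters $1$ and $2$, inversions coincide with the $(2,1)$-pairs of $\tau$. My first step is to establish the $\inv$-version of identity~(\ref{eqn-base_maj_identity}):
$$
\sum_{\tau \in e \shuffle f} q^{\inv(\tau)} = {m+n \brack m}_q,
$$
via a direct bijection with partitions.

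Concretely, let the positions of the $2$s in $\tau$ be $s_1 < s_2 < \cdots < s_n$, and set $\lambda_j := m + j - s_j$. Because $s_{j+1} \geq s_j + 1$, the $\lambda_j$ form a weakly decreasing sequence with $m \geq \lambda_1 \geq \lambda_2 \geq \cdots \geq \lambda_n \geq 0$, so $\lambda$ is a partition fitting in the $n \times m$ rectangle. One also checks that $\lambda_j$ is exactly the number of $1$s appearing to the right of the $j$-th $2$ in $\tau$, so summing over $j$ gives $|\lambda| = \inv(\tau)$. The map $\tau \mapsto \lambda$ is clearly a bijection onto all such partitions, and since $\sum_{\lambda \subseteq (m^n)} q^{|\lambda|} = {m+n \brack m}_q$ is the classical partition formula for the $q$-binomial coefficient, this yields the $\inv$-identity above.

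The final step is to invoke MacMahon's equidistribution theorem, which states that $\maj$ and $\inv$ have the same distribution over the permutations of any fixed multiset. Applied to the multiset $\{\!\{1^m, 2^n\}\!\}$, whose permutations are precisely the shuffles in $e \shuffle f$, this collapses the $\maj$-sum to the $\inv$-sum and completes the proof. The main obstacle is MacMahon's theorem itself; its standard proof uses Foata's fundamental bijection on words, which is available in B\'ona~\cite{Bona2016}~Section~2.2.2 (the reference the authors cite). As a self-contained alternative one can induct on $m+n$ via the $q$-Pascal recurrence ${m+n \brack m}_q = {m+n-1 \brack m}_q + q^n {m+n-1 \brack m-1}_q$: the shuffles ending in $2$ contribute $f(m,n-1)$ directly, since a maximal final letter cannot produce a descent at position $m+n-1$, while the shuffles ending in $1$ must be put in $\maj$-shift-by-$n$ correspondence with all of $(1^{m-1}) \shuffle f$, and constructing this last bijection is the genuine combinatorial work.
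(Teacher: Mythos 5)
Your proof is correct, but it is worth pointing out that the paper does not actually prove this statement: Theorem~\ref{thm-maj_ident} is presented there as a known result with a pointer to B\'ona~\cite{Bona2016}, Section~2.2.2, so any complete argument is necessarily a departure from the text. Your route --- first establishing the $\inv$-version via the bijection $\tau\mapsto\lambda$ with partitions in an $n\times m$ box, then transferring to $\maj$ by MacMahon's equidistribution theorem --- is sound. The key verifications all check out: with $s_1<\cdots<s_n$ the positions of the $2$s, one has $\lambda_j=m+j-s_j$ equal to the number of $1$s to the right of the $j$-th $2$ (since $s_j-j$ of the first $s_j$ letters are $1$s), the differences $\lambda_j-\lambda_{j+1}=s_{j+1}-s_j-1\ge0$ give a partition in the box, the map is invertible via $s_j=m+j-\lambda_j$, and $\sum_{\lambda\subseteq(m^n)}q^{|\lambda|}={m+n\brack m}_q$ is the standard partition interpretation of the $q$-binomial coefficient. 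The one external ingredient is MacMahon's theorem, which is classical and available in the very reference the authors cite; for a two-letter alphabet it can also be proved by a short direct bijection, so the dependence is harmless. What your approach buys is an explicit, self-contained combinatorial explanation of where the $q$-binomial coefficient comes from, at the cost of routing through $\inv$ rather than handling $\maj$ directly. Your sketched inductive alternative via the $q$-Pascal recurrence is candid about the remaining work (the $q^n$-shift correspondence for shuffles ending in $1$), but since that is offered only as an aside and your primary argument is complete, the proof stands.
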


The previous result will act as the base case for our inductive proof of the follow result cited in the introduction.

\begin{thm} Let $\pi$ and $\sigma$ be permutations with disjoint domains and lengths $m$ and $n$, respectively.  Then
$$\sum \limits_{\tau\in \pi\shuffle \sigma} q^{\maj\tau} = q^{\maj\pi+\maj\sigma}{m+n \brack m}_q.$$
\end{thm}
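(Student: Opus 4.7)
The plan is to proceed by induction on $\maj(\pi)+\maj(\sigma)$, with Theorem~\ref{thm-maj_ident} providing the base case and the $\maj$-decreasing bijections from the proof of Theorem~\ref{thm-majsc} driving the inductive step.

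For the base case $\maj(\pi)=\maj(\sigma)=0$, both $\pi$ and $\sigma$ are strictly increasing. Since $\maj$ is a descent statistic, Lemma~\ref{lem-setcompat} lets me replace $\pi$ and $\sigma$ by their standardizations and assume $\pi\in L([m])$ and $\sigma\in L([n]+m)$, so that $\pi=12\cdots m$, $\sigma=(m+1)\cdots(m+n)$, and every element of $\sigma$ exceeds every element of $\pi$. Any descent of a shuffle $\tau\in\pi\shuffle\sigma$ must therefore occur at a factor in which an element of $\sigma$ is immediately followed by an element of $\pi$, i.e., at each occurrence of $ba$ in $\omega(\tau)$. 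The substitution $a\mapsto 1$, $b\mapsto 2$ then yields a bijection $\pi\shuffle\sigma\to e\shuffle f$ that preserves the descent set and hence $\maj$, so Theorem~\ref{thm-maj_ident} produces ${m+n\brack m}_q$ and closes the base case.

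For the inductive step, assume without loss of generality that $\maj(\sigma)>0$; the case $\maj(\pi)>0$ is symmetric via Corollary~\ref{cor-screduction}(3) in place of (2). Pick $i\in\Des(\sigma)$ with $i-1\notin\Des(\sigma)$ and let $\sigma''$ be the permutation produced in the proof of Theorem~\ref{thm-majsc}, so that $\maj(\sigma'')=\maj(\sigma)-1$. The bijection $\Theta$ constructed there (or $\widetilde\Theta$ when $i=1$) maps $\pi\shuffle\sigma\to\pi\shuffle\sigma''$ and decreases $\maj$ by exactly one on every shuffle, hence $\sum_{\tau\in\pi\shuffle\sigma}q^{\maj\tau}=q\sum_{\tau''\in\pi\shuffle\sigma''}q^{\maj\tau''}$. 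Applying the inductive hypothesis to the right-hand sum (which has strictly smaller total major index) and absorbing the leading $q$ gives $q^{\maj\pi+\maj\sigma}{m+n\brack m}_q$, as required.

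The main obstacle here is really organizational rather than substantive: one must confirm that the per-shuffle $\maj$-decrease established in the proof of Theorem~\ref{thm-majsc} (via the explicit descent-set identity $\Des(\tau'')=(\Des(\tau)\setminus\{j+1\})\cup\{j\}$ and its $i=1$ analogue) transfers cleanly under the inductive hypothesis. Beyond that, the argument is pure bookkeeping: each reduction step contributes a single factor of $q$, so after $\maj(\pi)+\maj(\sigma)$ steps the prefactor $q^{\maj\pi+\maj\sigma}$ appears exactly as advertised.
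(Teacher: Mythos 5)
Your proposal is correct and follows essentially the same route as the paper: induction on $\maj(\pi)+\maj(\sigma)$, with Theorem~\ref{thm-maj_ident} handling the base case via the descent-preserving identification of shuffles with words in $\{1,2\}$, and the $\maj$-decreasing bijection $\widetilde{\Theta}$ of Theorem~\ref{thm-majsc} contributing one factor of $q$ per step. The only cosmetic difference is that the paper justifies the reduction to $\maj(\sigma)>0$ by citing Lemma~\ref{lem-setcompat} directly rather than Corollary~\ref{cor-screduction}; the underlying symmetry argument is the same.
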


\begin{proof}
Since we have shown that $\maj$ is shuffle compatible, we may assume that $\pi\in L([m])$ and $\sigma\in L([n]+m)$.
We will induct on $\maj(\pi)+\maj(\sigma)$.  If $\maj(\pi)+\maj(\sigma)=0$ then $\pi=12\ldots m$ and 
$\sigma = m+1,m+2,\ldots,m+n$.  In this case, the result follows from Theorem \ref{thm-maj_ident}.
This is because  replacing $e$ with $\pi$ and $f$ with $\sigma$, respectively, in  $\tau\in e\shuffle \, f$ turns a repeated pair $11$ or $22$ into an ascent, while descents remain descents since all elements of $\sigma$ are larger than those of $\pi$.

Now assume $\maj(\pi)+\maj(\sigma)>0$. By  Lemma~\ref{lem-setcompat} we can assume, without loss of generality, that 
$\maj(\sigma)>0$.  So the map $\widetilde{\Theta}:\pi\shuffle \sigma \to \pi \shuffle \sigma ''$ of Theorem~\ref{thm-majsc} is a bijection, where 
$\maj(\sigma'')=\maj(\sigma)-1$ and $\maj(\tau'')=\maj(\tau)-1$ for $\tau''=\widetilde{\Theta}(\tau)$.  By induction, the desired equation holds for $\pi \shuffle \sigma ''$.  Multiplying the equality by $q$ and substituting, shows that it also holds for $\pi\shuffle\sigma$.

\end{proof}

\section{Peak Statistics}

We now move on to statistics related to peaks. The proof for the statistic $(\udr, \pk)$ is notable because it was previously only conjectured to be shuffle compatible and here we give a proof that is similar in nature to those for the other peak statistics.
\begin{thm}
\label{thm-pkshufcompat}
The statistic $\pk$ is shuffle compatible.
\end{thm}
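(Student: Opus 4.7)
The plan is to apply the three-step framework of Section~3. First, using Corollary~\ref{cor-screduction} part~(3), I would reduce to showing that for any fixed $\pi\in L([m])$ and any $\sigma,\sigma'\in L([n]+m)$ with $\pk(\sigma)=\pk(\sigma')=p$, we have $\pk(\pi\shuffle\sigma)=\pk(\pi\shuffle\sigma')$ as multisets. As canonical family I would take
$$\Pi=\{\,\sigma\in L([n]+m):\Pk(\sigma)=\{2,4,\ldots,2p\}\,\},$$
which is nonempty whenever $2p\le n-1$ (a condition automatically satisfied whenever some permutation of length $n$ has peak number $p$). Any two elements of $\Pi$ have the same peak set, so Theorem~\ref{thm-shufcompatsetd} directly yields $\Pk(\pi\shuffle\sigma)=\Pk(\pi\shuffle\sigma')$ as multisets of sets; taking cardinalities then gives the desired equality of $\pk$-distributions, handling step~(ii) of the general approach.

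For step~(iii) I would use the distance function $d(\sigma)=\sum_{i\in\Pk\sigma}i$, which achieves its minimum value $p(p+1)$ precisely on $\Pi$. When $\sigma\notin\Pi$, I would locate a peak of $\sigma$ at some position $i\ge 3$ with $i-1,i-2\notin\Pk\sigma$, produce a permutation $\sigma''$ satisfying $\Pk(\sigma'')=(\Pk\sigma\setminus\{i\})\cup\{i-1\}$, and construct a $\pk$-preserving bijection $\pi\shuffle\sigma\to\pi\shuffle\sigma''$. The key leverage is that since every entry of $\sigma$ exceeds every entry of $\pi$, the peak element $\sigma_i$ is always greater than both of its neighbors in any shuffle $\tau$; consequently the position of $\sigma_i$ in $\tau$ automatically contributes a peak to $\tau$ unless $\sigma_i$ happens to sit at the very first or last position of $\tau$. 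This robustness suggests defining the bijection by a local transformation at and around $\sigma_i$'s slot in $\tau$, in the spirit of the map $\Theta$ used in the proof of Theorem~\ref{thm-desshufcompat}, but designed to transport the peak of $\sigma$ itself rather than merely a descent.

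The main obstacle will be the careful construction and verification of this local bijection. The naive attempt of simply swapping the values at positions $i-1$ and $i$ of $\sigma$ produces the desired $\Pk(\sigma'')$ only under restrictive conditions (roughly, $i-2\in\Asc\sigma$ together with a compatibility condition at position $i+1$); in general such a swap can either destroy the peak at $i$ without creating one at $i-1$ or else inadvertently introduce a spurious peak at $i+1$, either of which would change $\pk$. A successful treatment will therefore require splitting into cases based on the local ascent/descent pattern of $\sigma$ around position $i$, and performing a more elaborate repositioning in $\tau$ in the harder cases. One must also track how the $\pi\sigma\pi$, $\pi\sigma\sigma$ and $\sigma\sigma\pi$ patterns—whose contributions to $\pk(\tau)$ depend on ascents and descents of $\sigma$ together with the word $\omega(\tau)$—transform under the bijection, and handle the boundary effects that arise when $\sigma_i$ lies at an extreme position of $\tau$. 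Verifying that all these contributions balance so that $\pk(\tau)$ is preserved in every case is where the bulk of the technical work will lie.
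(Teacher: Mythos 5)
Your reduction and canonical-set setup are sound and are, up to a left--right mirror, exactly what the paper does: the paper invokes Corollary~\ref{cor-screduction} part~(2) and normalizes $\pi\in L([m])$ (the small-valued word) to have $\Pk(\pi)=\{2,4,\dots,2p\}$, using the distance $d(\pi)=\sum_{k\in\Pk(\pi)}k$ and moving one peak one step left at a time, whereas you invoke part~(3) and normalize $\sigma\in L([n]+m)$ (the large-valued word) in the identical way. Steps (i) and (ii) of the framework are correctly discharged in your write-up. The genuine gap is step (iii): the $\pk$-preserving bijection $\pi\shuffle\sigma\to\pi\shuffle\sigma''$ is never constructed, and that construction is the entire substance of the paper's proof. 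There, after factoring the segment of $\tau$ between $\pi_{j-2}$ and $\pi_{j+1}$ as $\sigma^a\pi_{j-1}\sigma^b\pi_j\sigma^c$, the map $\Theta$ of equation~(\ref{eqn-pkbij}) is the word-preserving map $\Phi$ in all but two configurations, and in those two it physically transports the block $\sigma^a$ (resp.\ $\sigma^c$) to the other side of the peak; the proof then consists of the case-by-case verification that the peak counts on $[s,t]$ match. You would need the mirror of all of this, and your proposal stops exactly where that work begins.

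I would also caution that your ``key leverage'' observation, while true (a peak of $\sigma$ does survive in every shuffle), does not buy the robustness you suggest. The fragile quantities are the \emph{extra} peaks of $\tau$ created by the interleaving --- the $\pi\sigma\pi$, $\sigma\sigma\pi$ and $\pi\sigma\sigma$ configurations --- and these depend on the ascent/descent pattern of $\sigma$ near position $i$, which is precisely what changes in passing to $\sigma''$. Concretely, take $\pi=1$, $\sigma=2453$ (so $\Pk\sigma=\{3\}$) and $\sigma''=2534$ (so $\Pk\sigma''=\{2\}$): the word-preserving map sends $24153$, which has two peaks, to $25134$, which has only one, so the naive substitution fails and a genuine block repositioning (sending $24153$ to $25341$, say) is forced. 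So the bijection cannot be a purely ``local transformation at $\sigma_i$'s slot''; it must relocate the interleaved $\pi$-elements relative to $\sigma_{i-1}$ and $\sigma_i$, in direct analogy with the paper's $\Theta$. Until that map is written down and checked, the proof is incomplete.
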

\begin{proof}
By  Corollary~\ref{cor-screduction} part (2) it suffices to prove that if $\pi,\pi' \in L([m])$ and $\sigma \in L([n]+m)$ with $\pk(\pi)=\pk(\pi')$ then $\pk(\pi\shuffle \sigma) = \pk(\pi' \shuffle \sigma)$. For the permutations with $\pk\pi=p$ we will use the canonical set
$$\Pi = \{\pi \in L([m]) \mid \Pk(\pi) =\{2,4, \ldots, 2p\}\}$$
which contains exactly the permutations  with $p$ peaks which are as far to the left as possible.
So if $\pi,\pi'\in\Pi$ then $\Pk(\pi)=\Pk(\pi')$.  It follows that  $\Pk(\pi\shuffle \sigma)=\Pk(\pi'\shuffle \sigma)$ since $\Pk$ is shuffle compatible. Therefore $\pk(\pi\shuffle \sigma)=\pk(\pi'\shuffle \sigma)$ and the conclusion of (ii) of the general approach holds.

Our measure of how close a permutation is to being in $\Pi$ is $d: L([m])\to \bbN$ given by 
$$d(\pi) =\sum \limits_{k\in \Pk(\pi)}k.$$
 Note that among all permutations in $L([m])$ with $\pk\pi=p$, the ones in $\Pi$ have the minimum possible $d$, namely 
$p(p+1)$.  Our strategy will be to find a bijection between shuffles $\tau\in\pi\shuffle\sigma$ and shuffles with an element of $\Pi$ which preserves $\pk$ and lowers $d$ by the proper amount, namely $d(\pi)-p(p+1)$.  

To reduce a permutation to one in $\Pi$ we will move its peaks to the left one position at a time. More specifically, if $\pi \not\in \Pi$ then there is at least one position $j\geq 3$ such that $j\in \Pk(\pi)$, but $j-2\not\in \Pk(\pi)$.  Thus there exists $\pi''\in L([m])$ such that 
\begin{equation}
\label{Pkpi''}
\Pk(\pi'') = \bigg(\Pk(\pi)\setminus \{j\}\bigg) \cup \{j-1\}.
\end{equation}
Since $d(\pi'')<d(\pi)$, it suffices to give a $\pk$-preserving bijection between $\pi\shuffle \sigma$ and $\pi''\shuffle \sigma$. 

\begin{figure}

\centering
\begin{tikzpicture}
[blackdot/.style={circle, fill=black,thick, inner sep=2pt, minimum size=0.2cm}, scale=.5] 
     \tikzstyle{edge_style} = [draw=black];
     \node (v000) at (-5,0) [blackdot, label=below:{$\pi_{j-2}=\tau_s$}]{};
	 \node (v00) at (-4,3)  [blackdot]{};
	 \node (v0) at (-2, 3) [blackdot]{};    
     \node (v1) at (-1,0.5) [blackdot,label=below:$\pi_{j-1}$]{};
     \node (v4) at (1,2) [blackdot, label=above:$\pi_j$]{};
     \node (v7) at (3,-2) [blackdot, label=below:{$\pi_{j+1}=\tau_t$}]{};
     \draw[edge_style] (v000) to (v00);
     \draw[snake=zigzag] (v00) to node{$\sigma_a$}  (v0);
     \draw[edge_style] (v0) to (v1);
     \draw[edge_style] (v1) to (v4);
     \draw[edge_style] (v4) to (v7);
\end{tikzpicture} \hspace{2cm}
\begin{tikzpicture}
[blackdot/.style={circle, fill=black,thick, inner sep=2pt, minimum size=0.2cm}, scale=.5] 
     \tikzstyle{edge_style} = [draw=black];
     \node (v000) at (-3,0) [blackdot, label=below:$\pi_{j-2}''$]{};
     \node (v1) at (-1,2) [blackdot,label=below:$\pi_{j-1}''$]{};
     \node (v4) at (1,0.5) [blackdot, label=below:$\pi_j''$]{};
     \node (v5) at (3,3) [blackdot]{};
     \node (v6) at (5,3) [blackdot]{};
     \node (v7) at (7,-2) [blackdot, label=below:$\pi_{j+1}''$]{};
     \draw[edge_style] (v000) to (v1);
     \draw[edge_style] (v1) to (v4);
     \draw[edge_style] (v4) to (v5);
     \draw[snake=zigzag] (v5) to node{$\sigma_a$} (v6);
     \draw[edge_style] (v6) to (v7);
\end{tikzpicture}
\caption{An illustration of the case when $\sigma_a \not=\emptyset$ and $\sigma_b = \sigma_c = \emptyset$.  }
\label{fig-pkane}
\end{figure}
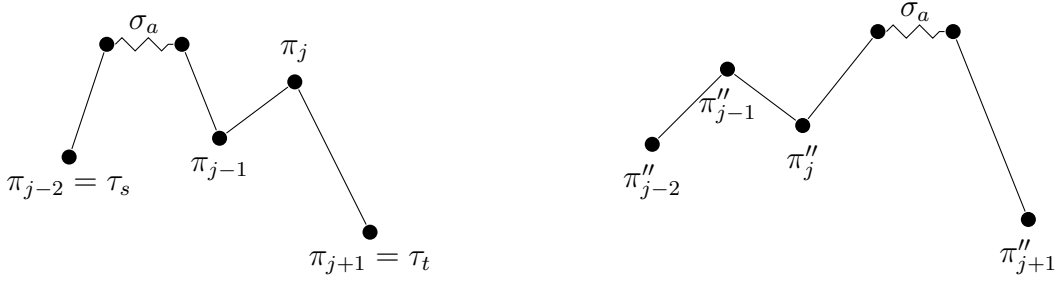

For each $\tau\in \pi\shuffle \sigma$, factor $\tau=\tau^a\tau^b\tau^c$ where $\tau^b$ is the factor
 of $\tau$ between $\pi_{j-2}$ and $\pi_{j+1}$, not including $\pi_{j-2}$ and $\pi_{j+1}$. Then $\tau^a$ is the remaining initial factor of $\tau$ and $\tau^b$ is the final factor.
Factor $\tau^b$ even further as 
$$\tau^b = \sigma^a \pi_{j-1}  \sigma^b \pi_j \sigma^c$$
so that $\sigma^a,\ \sigma^b,\ \sigma^c$ are the factors of $\sigma$ that are between the corresponding elements of $\pi$. Note that it is possible for any or all of $\sigma^a,\ \sigma^b,\ \sigma^c$ to be empty.

Define a map $\Theta:\pi \shuffle \sigma \to \pi''\shuffle \sigma$ by

\begin{equation}
\label{eqn-pkbij}
\Theta(\tau) = \begin{cases}
(\tau^a)''\pi_{j-1}''\pi_{j}''\sigma^a(\tau^c)'' & \text{ if } \sigma^a \not=\emptyset \text{ and } \sigma^b=\sigma^c=\emptyset, \\
\\
(\tau^a)''\sigma^c\pi_{j-1}''\pi_{j}''(\tau^c)'' & \text{ if } \sigma^a=\sigma^b=\emptyset \text{ and } \sigma^c\not=\emptyset, \\
\\
\Phi(\tau) & \text{ otherwise} \\
\end{cases}
\end{equation}
where $(\tau^a)''$ is the unique permutation such that $\omega(\tau^a)=\omega((\tau^a)'')$ and $(\tau^c)''$ is the unique permutation such that $\omega(\tau^c)=\omega((\tau^c)'')$.
It is clear from its definition that $\Theta$ is a  bijection.
So it only remains to show that $\Theta$ is $\pk$ preserving. 
Let $s,t$ be such that $\tau_s=\pi_{j-2}$ and $\tau_t=\pi_{j+1}$ and set $\ell = |\sigma^a|$.
Note that, as in the proof of Theorem~\ref{thm-shufcompatsetd}, for $i\in [m+n]\setm [s,t]$ we have $i\in \Pk(\tau)$ if and only if $i\in \Pk(\tau'')$.  So to  show that $\Theta$ is $\pk$ preserving we just need to concentrate on those peaks in $[s,t]$.

If $\sigma^a \not=\emptyset$ and  $\sigma^b=\sigma^c=\emptyset$ then it is straightforwards to check, using the cases from the proof of Theorem~\ref{thm-shufcompatsetd} for $\Pk$ that the only peaks of $\tau$ in the set $[s,t]$ occur as one of the following.
\begin{enumerate}
\item[(a)] Every peak of $\sigma^a$ is a peak of $\tau$.
\item[(b)] $s+1\in \Pk(\tau)$ if and only if $1\in \Des(\sigma^{a})$ or $\ell=1$.
\item[(c)] For $\ell\ge2$: $s+\ell \in \Pk(\tau)$ if and only if $\ell-1\in \Asc(\sigma^a)$.
\item[(d)] $t-1$ is always a peak of $\tau$.
\end{enumerate}

We now compare this to the similar list for $\tau''$. 
\begin{enumerate}
\item[(a)] Every peak of $\sigma^a$ is a peak $\tau''$.
\item[(b)] $s+3\in \Pk(\tau'')$ if and only if $1\in \Des(\sigma^a)$ or $\ell=1$.
\item[(c)]  For $\ell \geq 2$: $t-1\in \Pk(\tau'')$ if and only if $\ell-1 \in \Asc(\sigma^a)$.
\item[(d)]$s+1$ is always a peak of $\tau''$.
\end{enumerate}
Clearly these lists  contain the same number of peaks. An illustration of this case is given in Figure~\ref{fig-pkane}. In the figure, jagged lines represent a part of $\sigma$.

\begin{figure}

\centering
\begin{tikzpicture}
[blackdot/.style={circle, fill=black,thick, inner sep=2pt, minimum size=0.2cm}, scale=.47] 
     \tikzstyle{edge_style} = [draw=black];
     \node (v000) at (-5,0) [blackdot, label=below:{$\pi_{j-2}=\tau_s$}]{};
	 \node (v00) at (-4,3)  [blackdot]{};
	 \node (v0) at (-2, 3) [blackdot]{};    
     \node (v1) at (-1,0.5) [blackdot,label=below:$\pi_{j-1}$]{};
     \node (v2) at (0,3) [blackdot]{};
     \node (v3) at (2,3) [blackdot]{};
     \node (v4) at (3,2) [blackdot, label=below:$\pi_j$]{};
     \node (v5) at (4,3) [blackdot]{};
     \node (v6) at (6,3) [blackdot]{};
     \node (v7) at (7,-2) [blackdot, label=below:{$\pi_{j+1}=\tau_t$}]{};
     \draw[edge_style] (v000) to (v00);
     \draw[snake=zigzag] (v00) to node{$\sigma_a$}  (v0);
     \draw[edge_style] (v0) to (v1);
     \draw[edge_style] (v1) to (v2);
     \draw[snake=zigzag] (v2) to node{$\sigma_b$} (v3);
     \draw[edge_style] (v3) to (v4);
     \draw[edge_style] (v4) to (v5);
     \draw[snake=zigzag] (v5) to node{$\sigma_c$} (v6);
     \draw[edge_style] (v6) to (v7);
\end{tikzpicture} \hspace{2cm}
\begin{tikzpicture}
[blackdot/.style={circle, fill=black,thick, inner sep=2pt, minimum size=0.2cm}, scale=.47] 
     \tikzstyle{edge_style} = [draw=black];
     \node (v000) at (-5,0) [blackdot, label=below:{$\pi_{j-2}''$}]{};
	 \node (v00) at (-4,3)  [blackdot]{};
	 \node (v0) at (-2, 3) [blackdot]{};    
     \node (v1) at (-1,2) [blackdot,label=below:$\pi_{j-1}''$]{};
     \node (v2) at (0,3) [blackdot]{};
     \node (v3) at (2,3) [blackdot]{};
     \node (v4) at (3,0.5) [blackdot, label=below:$\pi_j''$]{};
     \node (v5) at (4,3) [blackdot]{};
     \node (v6) at (6,3) [blackdot]{};
     \node (v7) at (7,-2) [blackdot, label=below:$\pi_{j+1}''$]{};
     \draw[edge_style] (v000) to (v00);
     \draw[snake=zigzag] (v00) to node{$\sigma_a$}  (v0);
     \draw[edge_style] (v0) to (v1);
     \draw[edge_style] (v1) to (v2);
     \draw[snake=zigzag] (v2) to node{$\sigma_b$} (v3);
     \draw[edge_style] (v3) to (v4);
     \draw[edge_style] (v4) to (v5);
     \draw[snake=zigzag] (v5) to node{$\sigma_c$} (v6);
     \draw[edge_style] (v6) to (v7);
\end{tikzpicture}
\caption{An illustration of the case when $\sigma_a, \sigma_b$, and $\sigma_c$ are all nonempty.}
\label{fig-pkne}
\end{figure}
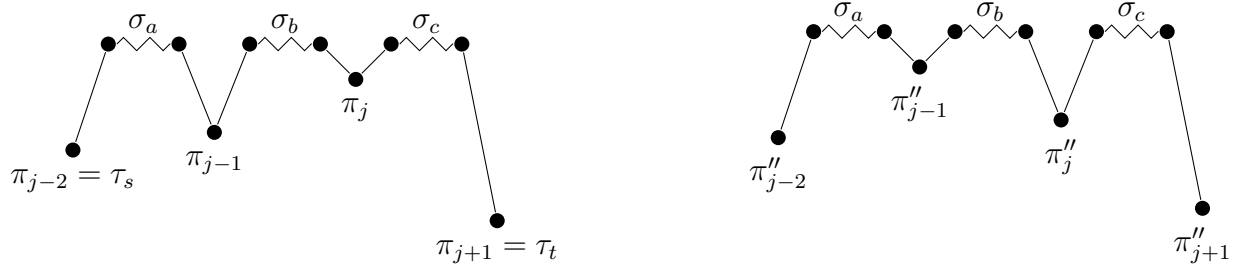

Next note that if $\sigma^a=\sigma^b=\emptyset$ and $\sigma^c\not=\emptyset$, then these two lists are swapped. So $\Theta$ is $\pk$ preserving in this case as well.

Now if $\tau^b$ is not in one of the previous two cases and $\sigma^a,\sigma^b$, and $\sigma^c$ are not all simultaneously empty then one  can check lists similar to those above to see that the peaks of both $\tau$ and $\tau''$ in the range $[s,t]$ are exactly the peaks of $\sigma^a, \sigma^b$, and $\sigma^c$ together with possibly their endpoints. An illustration of this case is given in Figure~\ref{fig-pkne}. Set $\ell_a=|\sigma^a|$, $\ell_b=|\sigma^b|$, and $\ell_c=|\sigma^c|$ and suppose all three cardinalities are nonzero.

\begin{enumerate}
\item[(a)] Every peak of $\sigma^a, \sigma^b$ and $\sigma^c$ is a peak of $\tau$.
\item[(b)] $s+1\in \Pk(\tau)$ if and only if $1\in \Des(\sigma^a)$ or $\ell_a=1$.
\item[(c)] For $\ell_a\geq 2$: $s+\ell_a \in \Pk(\tau)$ if and only if $\ell_a-1 \in \Asc(\sigma^a)$.
\item[(d)] $s+\ell_a+2\in \Pk(\tau)$ if and only if $1\in \Des(\sigma^b)$ or $\ell_b=1$.
\item[(e)] For $\ell_b\geq 2$: $s+\ell_a+\ell_b+1 \in \Pk(\tau)$ if and only if $\ell_b-1 \in \Asc(\sigma^b)$.
\item[(d)] $s+\ell_a+\ell_b+3\in \Pk(\tau)$ if and only if $1\in \Des(\sigma^c)$ or $\ell_c=1$.
\item[(e)] For $\ell_c\geq 2$: $s+\ell_a+\ell_b+\ell_c+2 \in \Pk(\tau)$ if and only if $\ell_c-1 \in \Asc(\sigma^c)$.
\end{enumerate}
The list for $\tau''$ is identical.

Finally, if $\sigma^a=\sigma^b=\sigma^c=\emptyset$, then $\Pk(\tau) \cap[s,t]= \{t-1\}$ and $\Pk(\tau'')\cap[s,t]= \{t-2\}$. Hence the number of peaks is again preserved. 
\end{proof}

\begin{thm}
\label{thm-lpksc}
The statistics $\lpk$, $\rpk$, $\epk$, $\udr$, and $(\udr,\pk)$  are shuffle compatible. 
\end{thm}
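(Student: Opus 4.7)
The plan is to prove each of the five claims by specializing the general approach of Section~3, using bijections modelled on the map $\Theta$ from the proof of Theorem~\ref{thm-pkshufcompat}. For $\lpk$, $\rpk$, and $\epk$ I apply Corollary~\ref{cor-screduction} directly and adapt $\Theta$ so that peaks may be slid all the way to the boundary. For $\udr$ and $(\udr,\pk)$ I first exploit arithmetic identities to reduce to the shuffle compatibility of finer joint statistics involving $\chi^+$ and $\chi^-$.

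For $\lpk$ with $\lpk(\pi)=p$, I invoke Corollary~\ref{cor-screduction}(2) to reduce to $\pi,\pi'\in L([m])$ and $\sigma\in L([n]+m)$, take as canonical set
\[
\Pi \;=\; \{\pi\in L([m]) : \Lpk(\pi)=\{1,3,5,\ldots,2p-1\}\},
\]
and use the distance function $d(\pi)=\sum_{k\in\Lpk(\pi)}k$. The bijection is a variant of~(\ref{eqn-pkbij}) that additionally handles the new case $j=2$, sliding a left peak from position $2$ to position $1$ (and forcing $\chi^-(\pi'')=1$). One checks that each such move decreases $d$ by exactly one. The statistic $\rpk$ is handled symmetrically via Corollary~\ref{cor-screduction}(3) and the right boundary, with canonical set concentrated at the right end. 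For $\epk$, since $\Epk=\Lpk\cup\Rpk$, the two arguments can be combined using a canonical set whose exterior peaks hug both endpoints.

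For $\udr$ and $(\udr,\pk)$, the starting point is the pair of identities
\[
\udr(\pi) \;=\; 2\pk(\pi) + 2\chi^-(\pi) + \chi^+(\pi) \;=\; 2\lpk(\pi) + \chi^+(\pi),
\]
valid for $|\pi|\geq 2$ (the case $|\pi|=1$ is trivial) and noted at the end of Section~2. Since $\chi^\pm\in\{0,1\}$, the pair $(\udr,\pk)$ determines $(\pk,\chi^+,\chi^-)$, because $\udr-2\pk=2\chi^-+\chi^+$ is a binary encoding of $(\chi^-,\chi^+)$; and $\udr$ alone determines $(\lpk,\chi^+)$, because $\chi^+=\udr\bmod 2$. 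Thus it suffices to prove that the joint statistics $(\pk,\chi^+,\chi^-)$ and $(\lpk,\chi^+)$ are shuffle compatible; each of these I would establish by further adapting $\Theta$ (or its $\lpk$-variant from the preceding paragraph) so that the case analysis also tracks the pairs $\tau_1\tau_2$ and $\tau_{m+n-1}\tau_{m+n}$ and shows that $\chi^-$ and $\chi^+$ are preserved by the bijection.

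The principal obstacle is this boundary bookkeeping. In Theorem~\ref{thm-pkshufcompat} only peaks at internal positions in the range $[s,t]$ needed to be monitored, but $\lpk$, $\rpk$, $\epk$, and $\chi^\pm$ all depend on what happens at positions $1$ or $m+n$ of $\tau$. The moves in $\Theta$ must therefore be supplemented with new cases when $j\in\{2,m-1\}$, or when a bordering factor $\sigma^a$ or $\sigma^c$ reaches an endpoint of $\tau$. Verifying in each such case, by the same sort of itemized peak-analysis used in Theorems~\ref{thm-shufcompatsetd} and~\ref{thm-pkshufcompat}, that the modified bijection preserves the relevant statistic and strictly decreases $d$ is the bulk of the work, but the structure of the argument closely parallels that of Theorem~\ref{thm-pkshufcompat}.
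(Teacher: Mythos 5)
Your overall strategy matches the paper's: the same reduction via Corollary~\ref{cor-screduction}, the same canonical set $\{\pi : \Lpk(\pi)=\{1,3,\ldots,2p-1\}\}$ with $d(\pi)=\sum_{k\in\Lpk(\pi)}k$ for $\lpk$, and the same identities $\udr=2\lpk+\chi^+=2\pk+2\chi^-+\chi^+$ used to reduce $\udr$ and $(\udr,\pk)$ to the joint statistics $(\lpk,\chi^+)$ and $(\pk,\chi^-,\chi^+)$. However, there is a genuine gap in the $(\udr,\pk)$ step. You propose to ``adapt $\Theta$ so that the case analysis also tracks $\tau_1\tau_2$ and shows that $\chi^-$ is preserved,'' but this cannot be fixed by bookkeeping inside the shuffle: the obstruction lies in the choice of the target permutation $\pi''$. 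If the leftmost peak of $\pi$ sits at position $3$ and $\pi_1>\pi_2$ (so $\chi^-(\pi)=1$), then any $\pi''$ with that peak relocated to position $2$ must satisfy $\pi''_1<\pi''_2>\pi''_3$, forcing $\chi^-(\pi'')=0$; since $\pk$ is unchanged, $\udr(\pi'')\neq\udr(\pi)$ and step (iii) of the general approach fails for every such $\pi''$. Note that this happens at $j=3$, which is not among the new boundary cases $j\in\{2,m-1\}$ you flag. The paper's resolution is structural, not a refinement of the case analysis: the canonical set is enlarged to a disjoint union $\Pi_0\sqcup\Pi_1$ with $\Lpk=\{2,4,\ldots,2p\}$ and $\Lpk=\{1,3,\ldots,2p-1\}$ respectively, and $\Theta$ is applied only when $j>3$, or $j=3$ with $\pi_1<\pi_2$; when the only available $j$ is $3$ with $\pi_1>\pi_2$ the permutation already lies in $\Pi_1$ and no move is made. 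Which component a permutation reduces to is then dictated by $\chi^-$, which is determined by $(\udr,\pk)$, so step (ii) still goes through.

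Two smaller points. For $\lpk$ with $j=2$, ``sliding a left peak from position $2$ to position $1$'' is not literally an instance of the map in~(\ref{eqn-pkbij}), since position $1$ is never a peak; the paper realizes this move by conjugating $\Theta$ with the embedding $\tau\mapsto 0\tau$, exactly as in the proof of Theorem~\ref{thm-majsc}, and one must separately verify that this conjugation is $\lpk$-preserving. For $\rpk$, the paper stays with part (2) of Corollary~\ref{cor-screduction} and moves the right peaks of $\pi$ (the small-valued word) rightward with the inverse bijections; your proposed use of part (3) would instead require a bijection relocating peaks of the large-valued word $\sigma$, which is not a mirror image of $\Theta$ and would have to be built from scratch.
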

The proofs for these statistics are based on the same idea as that of Theorem~\ref{thm-pkshufcompat}, but additional variants of the bijection used there are needed.
We again  use Corollary~\ref{cor-screduction} part (2).  So it suffices to show that if $\pi,\pi' \in L([m])$ and $\sigma \in L([n]+m)$ with $\St(\pi)=\St(\pi')$ then $\St(\pi\shuffle \sigma) = \St(\pi' \shuffle \sigma')$ for each $\St\in \{ \lpk, \rpk, \epk, \udr, (\udr,\pk)\}$. The main tools for this proof are the bijection $\Theta$ of Theorem~\ref{thm-pkshufcompat} and other similar bijections that preserve these statistics and allow us to replace $\pi$ with a permutation that is closer to being in our chosen set of canonical permutations, as outlined in the general approach.  
\vspace{.2cm}

\noindent\textbf{Proof for $\lpk$: }\\
To obtain the shuffle compatibility of $\lpk$ we reduce permutations  with $\lpk \pi=p$ to the canonical set
$$\Pi = \{\pi \in L([m]) \mid \Lpk(\pi) =\{1,3,5, \ldots, 2p-1\}\}. $$
The proof that two permutations in this set have the same $\lpk$ is similar to the analogous statement for $\pk$ and so is omitted.
We use a measure $d$ similar to that for $\pk$, but summing over left peaks instead.
So the minimal value for a permutation with $\lpk(\pi)=p$ is 
$$d(\pi) =\sum \limits_{k\in \Lpk(\pi)}k =  p^2.$$

If $\pi\not \in \Pi$ then there exists a position $j\geq 2$ such that $j\in \Lpk(\pi)$, but $j-2\not\in \Lpk(\pi)$. Let $\pi''$ be any permutation such that 
\begin{equation}
\label{eqn-lpkpi''}
\Lpk(\pi'') = \left(\Lpk(\pi)\setminus \{j\}\right)\cup \{j-1\}.
\end{equation}

Then it suffices to give a bijection $\widetilde{\Theta}:\pi\shuffle \sigma \to \pi''\shuffle \sigma$ that reduces $d$ and is $\lpk$ preserving.
If $j\geq 3$, then the bijection of the above proof for $\pk$ suffices. Thus, assume $j=2$.
To construct $\widetilde{\Theta}$ we proceed in a manner similar to the proof of Theorem~\ref{thm-majsc}.

Set $\widetilde{\pi}=0\pi$ and $\widetilde{\pi}''=0\pi''$ and use the notation 
$$S_\pi = \{\tau\in \pi\shuffle \sigma\; |\; \tau_1=\pi_1\}.$$
 Then we have the following commutative diagram.

$$\begin{tikzpicture}
\node (A1) {$\pi\shuffle \sigma$};
\node (A2) [right of=A1, xshift=2cm] {$S_{\widetilde{\pi}}$};
\node (A3) [below of=A1] {$\pi \shuffle \sigma''$};
\node (A4) [below of=A2] {$S_{\widetilde{\pi}''}$};
\node (A5) [right of=A2, xshift=-1cm] {$\subseteq$};
\node (A6) [right of=A5, xshift=-0.8cm] {$\widetilde{\pi}\shuffle \sigma$};
\node (A7) [right of=A4, xshift=-1cm] {$\subseteq$};
\node (A8) [right of=A7, xshift=-0.8cm] {$\widetilde{\pi} \shuffle \sigma''$};
\draw[->](A1) to node{$\iota$}  (A2);
\draw[->] (A1) to [swap]node{$\widetilde{\Theta}$} (A3);
\draw[->] (A3) to node{$\iota''$} (A4);
\draw[->] (A2) to node{$\Theta$} (A4);
\end{tikzpicture}
$$

Here $\iota$ is the bijection which identifies $\tau\in \pi\shuffle \sigma$ with $0\tau \in S_{\widetilde{\pi}}$. 
The map $\iota''$ is defined similarly. The map $\Theta$ is the $\pk$-preserving bijection used in the proof of $\pk$ to move the peak at position $3$ to position $2$.

Then 
\begin{equation}
\label{eqn-lpkbij}
\widetilde{\Theta}= \iota''^{-1} \circ \Theta \circ \iota
\end{equation}
 is the required bijection.  It is clear that this map reduces $d$ by $1$ since $\Theta$ has this property. The injection $\Theta\circ \iota$ is $\lpk$ preserving because $\Theta$ is $\pk$ preserving and position 2 in $\iota(\tau)$ is a peak if and only if position 1 is a left peak of $\tau$. And similarly, position 1 in  $\iota''^{-1}\Theta(\iota(\tau)))$ is a left peak if and only if it position 2 is a peak in  $\Theta(\iota(\tau))$ which proves the claim and completes the demonstration for $\lpk$.
 \hfill $\square$
\vspace{.2cm}

\noindent\textbf{Proof for $\rpk$: }\\
To obtain the shuffle compatibility of $\rpk$ we use an approach similar to that of $\lpk$ by changing our set of canonical permutations 
for $\pi$ with $\rpk(\pi)=p$ to
$$\Pi = \{\pi \in L([m]) \mid \Rpk(\pi) =\{m,m-2,m-4, \ldots, m-2p\}\}. $$
We also change our measure $d$ of how close a permutation is to being in $\Pi$ to
$$d(\pi) = \sum \limits_{k\in \Rpk(\pi)} (m-k).$$
 We have that the minimal value for a permutation with $\rpk(\pi)=p$ is $d(\pi) = p(p+1)$
and if two permutations $\pi_1, \pi_2\in \Pi$ satisfy $\Rpk(\pi_1)=\Rpk(\pi_2)$, then by Theorem~\ref{thm-shufcompatsetd} they satisfy the conclusion of (ii) of the general approach.

If $\pi\not \in \Pi$ then there exists a position $j\leq m-1$ such that $j\in \Rpk(\pi)$, but $j+2\not\in \Rpk(\pi)$. Let $\pi''$ be any permutation such that 
$$\Rpk(\pi'') = \left(\Rpk(\pi)\setminus \{j\}\right)\cup \{j+1\}.$$

The remainder  of the proof follows the same lines as for $\lpk$ except we move peaks to the right instead of left using the inverse bijections. \hfill $\square$

\vspace{.2cm}

\noindent\textbf{Proof for $\epk$: }\\
The shuffle compatibility of $\epk$ follows  from the combination of bijections in the proofs for $\pk$, $\lpk$, and $\rpk$. We use the the canonical set of permutations
$$\Pi = \{\pi \in L([m]) \mid \Epk(\pi) =\{1,3 ,5, \ldots, 2p-1\} \text{ for some } p\geq 0\}$$
and measure 
$$d(\pi) = \sum \limits_{k\in \Epk(\pi)}k.$$
We move all peaks or right peaks as far to the left as possible using the bijections from $\pk$, $\lpk$ and inverse of the bijection from $\rpk$ can be used to move a final ascent to the left. The remainder of the proof is analogous to those of $\lpk$ and $\rpk$. \hfill $\square$

\vspace{.2cm}

\noindent\textbf{Proof for $\udr$: }\\
As a first step, we observe that for a permutation $\pi\in L([m])$, $m\ge1$, we have  

\begin{equation}
\label{eqn-udr}
\udr(\pi) = 
2\pk(0\pi)+\chi^+(0\pi) 
\end{equation}
 since every peak of $0\pi$  involves two distinct runs, and $\chi^+(0\pi)$ accounts for the possibilities of either a final increasing run or that $|\pi|=1$. There is nothing to prove if $\pi=\emptyset$ and the proof for $|\pi|=1$ is trivial, so we will assume for the remainder of this proof that $m\geq 2$. In this case equation~(\ref{eqn-udr}) simplifies slightly to
\begin{equation}
\label{eqn-udr2}
\udr(\pi) = 
2\lpk(\pi)+\chi^+(\pi)
\end{equation} 
since the left peaks of $\pi$ are the peaks of $0\pi$.
Considering this equation modulo two we see that the value of $\udr(\pi)$ determines both $\lpk(\pi)$ and $\chi^+(\pi)$, as well as conversely.

Just as for $\lpk$, our canonical set for permutations with a $\udr\pi=2p+\chi^+(\pi)$ is
$$\Pi = \{\pi\in L([m]) \mid \Lpk(\pi) = \{1, 3, 5, \ldots 2p-1\}\}. $$
Note this is the same canonical set as was used in the proof of $\lpk$.
Take two permutations $\pi, \pi'\in \Pi$ with the same $\udr$.  So, as discussed in the previous paragraph, $\chi^+(\pi)=\chi^+(\pi')$.   It follows that
$\Lpk(\pi)=\Lpk(\pi')$.   So for any $\sigma\in L([n]+m)$ we have
$$\udr(\pi\shuffle \sigma) = \udr(\pi'\shuffle \sigma)$$ 
since the bijection $\Phi$  preserves both $\Lpk$ and $\chi^+$. Therefore part $(ii)$ of the general approach is satisfied.

Our measure of how close a permutation is to being in the canonical set will be the same as it was for $\lpk$,
$$d(\pi) =\sum \limits_{k\in \Lpk(\sigma)}k.$$
A permutation with $\lpk(\pi)=p$ is in this canonical set if and only if $d(\pi) = p^2$, which is the minimal value for a permutation with $\udr(\pi)=2p+\chi^+(\pi)$.  Since $\udr(\pi)$ is determined by $\lpk(\pi)$ and $\chi^+(\pi)$, to complete the proof
 it will suffice to show that the bijection $\widetilde{\Theta}$ from (\ref{eqn-lpkbij})  used in the demonstration for $\lpk$ also preserves the statistic $\chi^+$. Note first that by the definition of $\widetilde{\Theta}$ that it suffices to check that $\Theta$ itself is $\chi^+$ preserving. This is because $\iota$ and $\iota''$ essentially prepend a $0$ to a permutation and then remove it. This does not affect the order relation between the final two elements of a shuffle since we have assumed that $|\pi|\geq 2$.
	
Since  definition~(\ref{eqn-pkbij}) for $\Theta$ uses the bijection $\Phi$, we first show that $\Phi$ is $\chi^+$ preserving  when applied to $\pi, \pi'' \in L([m])$  that satisfy $\udr(\pi)=\udr(\pi'')$.  
As previously noted, the assumption about $\udr$ implies $\chi^+(\pi)  = \chi^+(\pi'')$. Thus, the final two positions of $\pi$ and $\pi''$ must satisfy the same order relation. It follows that for a shuffle $\tau\in \pi\shuffle\sigma$, that replacing $\pi$ with $\pi''$  to obtain $\tau'=\Phi(\tau)$ does not change the order relation of the final two positions of $\tau$. This means that $\chi^+$ is also preserved under $\Phi$.

Now  assume that  $\pi\not\in \Pi$. Choose $\pi''$ as in equation~(\ref{eqn-lpkpi''}) with the additional restriction that
$\udr(\pi)=\udr(\pi'')$.
This implies $\chi^+(\pi'')=\chi^+(\pi)$. Let $\tau\in \pi\shuffle \sigma$. If $j\leq m-2$ or  $\tau_{m+n}\in \sigma$, it is clear from the definition of $\Theta$ given in (\ref{eqn-pkbij}) that $\chi^+(\tau)=\chi^+(\widetilde{\Theta}(\tau))$ and $\chi^+$ is preserved.

It therefore remains to check that $\Theta$ is $\chi^+$ preserving in the case that $j=m-1$ and $\tau_{m+n}=\pi_m$. 
Since we have already checked that $\Phi$ preserves $\chi^+$, we only need to deal with the first two cases in the definition of $\Theta$.
 \begin{itemize}

\item If $\sigma^a\not=\emptyset$, $\sigma^b=\sigma^c=\emptyset$, then we have
 $$\sigma^a\pi_{m-2} \pi_{m-1}\pi_{m} \stackrel{\Theta}{\mapsto} \pi_{m-2}'' \pi_{m-1}''\sigma^a \pi_{m}''$$
 so that both shuffles have a descent at position $m+n-1$.

\item If $\sigma^a=\sigma^b=\emptyset$,  $\sigma^c\not=\emptyset$, then we have 
 $$\pi_{m-2} \pi_{m-1}\sigma^c\pi_{m} \stackrel{\Theta}{\mapsto} \sigma^c\pi_{m-2}'' \pi_{m-1}''\pi_{m}''.$$
Since $\pi_{m-1}$ is a peak we have $\chi^+(\pi'')=\chi^+(\pi)=0$.  So, again, both shuffles  have a descent at position $m+n-1$

 \end{itemize}

From this we can conclude that the bijections $\Theta$, and hence $\widetilde{\Theta}$, used in the proofs for the statistics $\pk$ and $\lpk$ respectively are also $\udr$ preserving. \hfill $\square$

\vspace{.2cm}
\noindent\textbf{Proof for $(\udr, \pk)$: }\\
For any  permutation $\pi$ with $|\pi|\geq 2$, we can write
$$
\lpk(\pi)=\pk(\pi)+\chi^-(\pi)
$$
So~(\ref{eqn-udr2}) becomes
\begin{equation}
\udr(\pi) = 2\pk(\pi)+2\chi^-(\pi)+ \chi^+(\pi).
\end{equation}
By a parity argument like the one used for~(\ref{eqn-udr2}) we see that the value of  $(\udr(\pi), \pk(\pi))$ uniquely determines 
both $\chi^-$, and $\chi^+$.

Let
\begin{align*}
\Pi_0 &= \{\pi\in L([m]) \mid \Lpk(\pi) = \{ 2, 4, \ldots 2p\} \text{ for some } p\geq 0 \}\\
\Pi_1 &= \{\pi\in L([m]) \mid \Lpk(\pi) = \{1, 3, 5, \ldots 2p-1\} \text{ for some } p\geq 1 \}
\end{align*}
We then use the canonical set the disjoint union
$$\Pi = \Pi_0\sqcup \Pi_1.$$
Note that if $\pi, \pi'\in\Pi$ satisfy $(\udr(\pi), \pk(\pi))=(\udr(\pi'), \pk(\pi'))$ then, by the observation in the first paragraph of the proof, they are either both in $\Pi_0$ or both in $\Pi_1$.
It follows that $\Lpk(\pi)=\Lpk(\pi')$.  Now apply the bijection $\Phi:\pi\shuffle\sigma\rightarrow\pi'\shuffle\sigma$ where we have shown earlier that 
$\Phi$ preserves $\Lpk$ and $\chi^+$.  Also, the assumption on $\pi$ and $\pi'$ implies $\chi^-(\pi)=\chi^-(\pi')$.  It is easy to prove that in this case $\Phi$ preserves $\chi^-$.  It follows that $(\udr,\pk)$ is preserved by $\Phi$ and part (ii) of our method is satisfied.

Now set 
$$d(\pi) = \sum \limits_{k\in \Lpk(\pi)}k.$$
If $\pi\not\in\Pi$ then we use the map $\Theta$ from the proof for $\pk$ to map $\pi\shuffle\sigma$ to $\pi''\shuffle\sigma$ where $\pi''$ is given by~(\ref{Pkpi''}), as long as $j$ can be chosen with $j>3$, or $j=3$ and $\pi_1<\pi_2$.  
If the only possible $j$ value is $j=3$ and $\pi_1>\pi_2$ then we do not apply $\Theta$,
and we do not need to do so since $\pi\in\Pi_1$.
It follows that we can always choose $\pi''$ so that  $\chi^-(\pi)=\chi^-(\pi'')$.  One can now show that in this case $\Theta$ preserves 
$\chi^-$ similarly to the proof that the map preserves $\chi^+$.  Since $\Theta$ also preserves $\pk$, it preserves the pair $(\udr,\pk)$ and we are done.
\hfill $\square$

\section{Future Work}

There are still many open questions to be answered in this relatively new line of inquiry.  The first natural question is whether a proof similar to those above can be given for the statistic $(\udr, \pk, \des)$, which was conjectured to be shuffle compatible in \cite{GesZhu17}. 
\begin{quest}
Can a bijective proof for the statistic $(\udr, \pk, \des)$ be given that follows the general approach given in this article?
\end{quest}
Such a proof will require a more careful analysis of the common aspects of the bijection used for $\des$ and for $\pk$. The fundamental obstacle in our approach to this triple statistic is that our bijection for $\pk$ requires moves that may not preserve the number of descents in the permutation.

Other notions of shuffle compatiblitly were introduced by Grinberg in \cite{Grin2018}, so one could ask  whether the same general approach can be used to give bijective proofs for his shuffle compatibility analogues. One example is as follows.
\begin{defn}
A permutation statistic $\St$ is called \emph{left shuffle compatible} if  for any two disjoint nonempty permutations $\pi$ and $\sigma$ with the property that $\pi_1>\sigma_1$, the distribution
$$\{\{\St(\tau) \mid \tau \in \pi \shuffle \sigma,\ \tau_1=\pi_1 \; \}\}$$
depends only on $|\pi|$, $|\sigma|$, $\St(\pi)$, and $\St(\sigma)$. \hfill $\square$
\end{defn}
An analogous definition can be given for right shuffle compatibility. 
Note that an analogous set of shuffles appeared naturally in equation~\ree{eqn-ssigmadef}.
Our theory here would need to be modified as even Lemma~\ref{lem-setcompat} no longer holds. The bijections used there can take a shuffle starting with $\tau_1=\pi_1$ and swap it with one with $\tau_1=\sigma_1$ which is no longer in the set of left shuffles.

Another possible extension of this work is to permit permutations with repeated elements. For example consider the statistic $\maj$. Then 
$\pi = 4212$, $\pi' = 2221$, $\sigma=76$ and $\sigma'= 98$
satisfy $\maj(\pi)=\maj(\pi')$ and $\maj(\sigma)=\maj(\sigma')$.  Note that in this case
$$
\maj(\pi\shuffle \sigma) = \maj(\pi'\shuffle \sigma') = \{\{4, 5, 6^2, 7^2, 8^3, 9^2, 10^2, 11, 12\}\}.
$$
There is a canonical  extension of the standardization map to permutations with repetitions by replacing each maximal constant subword of $\pi$ with the elements $i,i+1,\dots,j$ for some $i,j$ from left to right.  For example,
$\std 24212 = 25314$.  This could be a tool in proving shuffle compatibility for permutations with repetitions.

The existence of a shuffle compatible permutation statistic that is not a descent statistic as constructed by O\u{g}uz in \cite{Oguz2018} raises the question as to how one would approach giving bijective proofs to such statistics. The proof for the statistic in \cite{Oguz2018} is by an exhaustive computation for all permutations of length 4 or less. Note that for non-descent statistics our approach no longer works. Indeed, Lemma \ref{lem-setcompat} only holds for descent statistics and so we lose the power of Corollary \ref{cor-screduction}.

\nocite{*}
\bibliographystyle{abbrvnat}

%\nocite{*}
%\bibliographystyle{alpha}

%\bibliography{bpsref}

\end{document}